\theoremstyle{plain}
\newtheorem{theorem}{Theorem}
\newtheorem{proposition}[theorem]{Proposition}
\newtheorem{lemma}[theorem]{Lemma}
\theoremstyle{remark}
\newtheorem{definition}[theorem]{Definition}
\newtheorem{example}[theorem]{Example}
\newtheorem{assumption}[theorem]{Assumption}
\newcommand{\norm}[1]{\left\lVert#1\right\rVert}
\newcommand{\e}{\epsilon}
\newcommand{\tr}{\text{tr}}
\newcommand{\R}{\mathbb{R}}
\newcommand{\Z}{\mathbb{Z}}
\newcommand{\Q}{\mathcal{Q}}
\newcommand{\E}{\mathbb{E}}
\newcommand{\Prob}{\mathbb{P}}
\renewcommand{\P}{\mathcal{P}}
\newcommand{\W}{\mathcal{W}}
\newcommand{\A}{\mathcal{A}}
\newcommand{\K}{\mathcal{K}}
\renewcommand{\H}{\mathcal{H}}
\newcommand{\X}{\mathcal{X}}
\newcommand{\Y}{\mathcal{Y}}
\newcommand{\B}{\mathcal{B}}
\newcommand{\C}{\mathcal{C}}
\renewcommand{\S}{\mathcal{S}}
\let\temp\phi
\let\phi\varphi
\let\varphi\temp
\newcommand{\eq}[1]{\begin{align*}#1\end{align*}}
\begin{document}

\title{Upper and lower bounds on the subgeometric convergence of adaptive Markov chain Monte Carlo}

\author[1]{Austin Brown \thanks{email: austinbrown@tamu.edu}}
\author[2]{Jeffrey S. Rosenthal \thanks{email: jeff@math.toronto.edu}}
\affil[1]{Department of Statistics, Texas A\&M University, College Station, Texas, USA}
\affil[2]{Department of Statistical Sciences, University of Toronto, Toronto, Canada}

\maketitle

\begin{abstract}
We investigate lower bounds on the subgeometric convergence of adaptive Markov chain Monte Carlo under any adaptation strategy. In particular, we prove general lower bounds in total variation and on the weak convergence rate under general adaptation plans. 
If the adaptation diminishes sufficiently fast, we also develop comparable convergence rate upper bounds that are capable of approximately matching the convergence rate in the weak subgeometric lower bound.
These results provide insight into the optimal design of adaptation strategies and also limitations on the convergence behavior of adaptive Markov chain Monte Carlo. 
Applications to an adaptive unadjusted Langevin algorithm as well as adaptive Metropolis-Hastings with independent proposals and random-walk proposals are explored. 
\end{abstract}

\textbf{MSC:} 
60J27; 60J22; 60G07

\textbf{Keywords:} 
adaptive Metropolis-Hastings, 
lower bounds for adaptive MCMC,
upper bounds for adaptive MCMC,
weak convergence of adaptive MCMC

\section{Introduction}

Let $\pi$ be a Borel probability measure on a Polish space $\X$.
Adaptive Markov chain Monte Carlo \citep{haario:2001, roberts:rosenthal:2007} is a widely successful framework to simulate realizations from $\pi$ when optimal tuning parameters for the Markov chain are not readily available.
The adaptive process $(\Gamma_t, X_t)_{t = 1}^\infty$ is constructed from a family of Markov kernels indexed by a set of potential tuning parameters.
The discrete-time adaptive process first updates the tuning parameter $\Gamma_t | ( \Gamma_s, X_s )_{0 \le s \le t-1}$ with an adaptation strategy utilizing previous history and next, updates $X_t | \Gamma_{t}, X_{t - 1}$ using a Markov transition kernel.
The goal is for the adaptive process to ``learn" optimal tuning parameters so that the marginal distribution of the random variable $X_t$ produces a close approximation to the measure $\pi$.

With a large option for adaptation strategies, theoretical convergence rates of adaptive algorithms are less understood than for non-adaptive Markov chain Monte Carlo (MCMC) where fixed tuning parameters are chosen carefully beforehand.
In particular, a theoretical understanding of the rate of convergence is essential in applications as it helps to ensure a stable and reliable Monte Carlo simulation.
However, adaptive MCMC can exhibit empirical performance superseding the performance of standard MCMC even though much of the theoretical understanding is lacking.
For example, adaptive MCMC is widely used to automatically learn the covariance in random-walk Metropolis-Hastings \citep{haario:2001}, which is often difficult or impossible to choose optimally with only fixed tuning parameter choices. 

The main contributions of this paper develop general subgeometric lower bounds in total variation and the weak convergence rate of adaptive MCMC paired with upper bounds under strong conditions on the rate at which adaptation diminishes.
Applications of the theory are demonstrated on an adaptive unadjusted Langevin algorithm, Metropolis-Hastings independence sampler, and an adaptive Metropolis-Hastings random-walk.
The lower bounds for convergence hold under arbitrary adaptation plans and serve as a measurement of the optimal convergence behavior for adaptive MCMC.
The techniques for obtaining these lower bounds are based on finding large discrepancies between the tail probabilities of the marginal adaptive process and the target measure $\pi$.
Since the convergence rate is determined by tail properties, this may guide further theoretical understanding of some modern adaptation strategies that restrict adaptation to compact sets \citep{pompe:etal:2020}.
Convergence rate lower bounds can also be of practical use in applications to determine if an appropriate rate is achievable so that central limit theorems may hold \citep{andrieu:moulines:2006, laitinen:vihola:2024}.

One barrier in developing lower bounds for adaptive MCMC is due to the non-Markovian, non-reversible nature of these processes and spectral analysis for reversible Markov processes is not directly available.
To the best of our knowledge, the lower bounds for weak convergence developed here are novel, even when applied to non-adapted Markov chains, and general total variation lower bounds have not yet been explored for adaptive MCMC.
In specific situations, adaptive random-walk algorithms have been shown to improve ``local" behavior but fail to adapt to ``global" properties of the target measure, such as the tail probabilities, and proven to experience poor convergence properties \citep{schmidler:woodard:2011}.
Related research develops general lower bounds in total variation for Markov processes \citep[Theorem 3.6, Corollary 3.7]{hairer:2009}.
More recently, this technique has also been extended to polynomial rate lower bounds in unbounded Wasserstein distances for some Markov processes \citep[Theorem 1.2]{sandric:etal:2022}.
When the tail decay of the target measure is unavailable, lower bounds for Markov processes in total variation have recently been developed, but a precise computation of the constants is not available \citep{bresar:etal:2024}.

In addition to lower bounds, we develop explicit quantitative subgeometric upper bounds in total variation that can match the lower bound rate if the adaptation diminishes sufficiently fast.
The condition required on the adaptation is similar to the well-known diminishing adaptation condition \citep{roberts:rosenthal:2007} often used for the asymptotic convergence of adaptive MCMC.
To the best of our knowledge, this is the first subgeometric upper bound to quantify the mixing for adaptive MCMC in total variation. 
In comparison, existing convergence results are asymptotic \citep{yves:fort:2010}, require strong assumptions for adaptive MCMC \citep{andrieu:atach2007, andrieu:moulines:2006}, or develop central limit theorems through Poisson's equation \citep{laitinen:vihola:2024}.

The organization of this article is as follows.
Some preliminary definitions are presented in Section~\ref{section:prelim} and Section~\ref{section:lower_bounds} first develops lower bounds in total variation for large classes of adaptation strategies and then extends these lower bounds to weak convergence when the state space is Euclidean.
A lower bound is shown on a concrete example for the adapted unadjusted Langevin algorithm.
Section~\ref{section:upper_bounds} proves comparable upper bounds under diminishing conditions on the adaptation plans that are capable of approximately matching the lower bound rates.
Section~\ref{section:toy_example} illustrates the lower bounds on a toy example with an adaptive Metropolis-Hastings independence sampler, and Section~\ref{section:adaptive_rwm} applies the lower bounds to the popular adaptive random-walk Metropolis-Hastings.
Section~\ref{section:conclusion} provides a final discussion on the results and future research directions.

\section{Preliminaries}
\label{section:prelim}

For two Borel probability measures $\mu, \nu$ on $\X$, let $\C(\mu, \nu)$ be the set of all couplings consisting of Borel probability measures on $\X \times \X$ satisfying $\Gamma(\cdot \times \X) = \mu$ and $\Gamma(\X \times \cdot) = \nu$. Denote then the total variation distance between $\mu$ and $\nu$ as the best probability of the off-diagonal over all possible couplings, that is,
\[
\norm{ \mu - \nu }_{\text{TV}}
= \inf_{\xi \in \C(\mu, \nu)} \xi( \{ (x, y) \in \X \times \X : x \not= y \} ).
\]
Denote the min and max of $a, b \in \R$ by $a \wedge b$ and $a \vee b$ respectively.
On a Polish space $(\X, d)$ where $d : \X \times \X \to [0, \infty)$ is a metric, we denote the Wasserstein distance that metrizes the weak convergence of probability measures \citep[Theorem 11.3.3]{dudley:2018}
\[
\W_{d \wedge 1}(\mu, \nu)
= \inf_{\xi \in \C(\mu, \nu)} \int_{\X \times \X} \left[ d(x, y) \wedge 1 \right] \xi(dx, dy).
\]

Let $\X$ be a Polish space and $\Y$ be a Borel measurable space equipped with their Borel sigma-algebras $\B(\X)$ and $\B(\Y)$ respectively where $\X$ is the state space and $\Y$ is the space for tuning parameters.
We now define the adaptive process $(\Gamma_t, X_t)_{t = 0}^\infty$ on $\Y \times \X$ using the filtration $\H_t = \B( \Gamma_s, X_s, 0 \le s \le t )$.
Let $\Q$ define an adaptation plan which denotes the map $t \mapsto \Q_t$ for all $t \in \Z_+$ where $\Q_t : (\Y \times \X)^{t} \times \B(\Y) \to [0, 1]$ is a Borel probability kernel.
The kernels $\Q_t$ act on Borel functions $g : \Y \to \R$ and Borel measures $\nu$ on $(\Y \times \X)^{t}$ with 
\eq{
&(\Q_t g)(\gamma_0, x_0, \ldots, \gamma_{t-1}, x_{t-1}) = \int_{\X} g(\gamma_t) \Q_t( \gamma_0, x_0, \ldots, \gamma_{t-1}, x_{t-1}, d\gamma_t )
\\
&(\nu \Q_t)(\cdot) = \int_{\X} \Q_t(\gamma_0, x_0, \ldots, \gamma_{t-1}, x_{t-1}, \cdot) \nu(d\gamma_0, dx_0, \ldots, d\gamma_{t-1}, dx_{t-1})
}
for all $t \in \Z_+$ and $\gamma_0, x_0, \ldots, \gamma_{t-1}, x_{t-1} \in (\Y \times \X)^t$.
Initialized at fixed $x_0, \gamma_0 \in \X \times \Y$, the discrete-time adaptive process first updates the tuning parameter 
\[
\Gamma_t | ( \Gamma_s, X_s )_{0 \le s \le t-1} \sim \Q_t(( \Gamma_s, X_s )_{0 \le s \le t-1}, \cdot)
\]
using an adaptation plan.
Let $(\P_\gamma)_{\gamma \in \Y}$ be a family of Borel Markov kernels where $\P_\gamma : \X \times \B(\X) \to [0, 1]$ for each $\gamma \in \Y$ and for each $x \in \X$, $x, \gamma \mapsto \P_\gamma(x, \cdot)$ is Borel measurable.
The Markov family acts on Borel functions $f : \X \to \R$ and Borel measures $\mu$ on $\X$ with 
\eq{
&(\P_\gamma f)(x) = \int_{\X} f(y) \P_\gamma(x, dy)
&(\mu \P_\gamma)(\cdot) = \int_{\X} \P_\gamma(x, \cdot) \mu(dx)
}
for all $(x, \gamma) \in \X \times \Y$.
The process then updates the state space given the updated tuning parameters 
\[
X_t | \Gamma_{t}, X_{t - 1} \sim \P_{\Gamma_{t}}(X_{t-1}, \cdot)
\]
for $t \ge 1$ using the Markov kernel.

\section{Lower bounds on the convergence of adaptive MCMC}
\label{section:lower_bounds}

Let $\S(\X, \Y)$ denote the set of all possible adaptation plans $\Q$ that define the Borel kernels $\Q_t$ updating the tuning parameters at every iteration time $t$.
For a chosen adaptive strategy $\Q \in \S(\X, \Y)$, we denote the marginal of the adaptive process at iteration time $t$ by $X_t \sim \A_\Q^{(t)}((\gamma_0, x_0), \cdot)$.
We will develop conditions to lower bound the total variation over all feasible adaptation strategies, that is, to lower bound
\[
\inf_{\Q \in \S(\X, \Y) } \norm{ \A_\Q^{(t)}(\gamma_0, x_0, \cdot) - \pi }_{\text{TV}}
\]
for $t \in \Z_+$.

The main tool will be a function prescribing a subgeometric rate defined implicitly as an inverse which we now define.
For concave functions $\phi : (0, \infty) \to (0, \infty)$ and $w_0 \in [1, \infty)$, define
\begin{align}
H_{w_0, \phi}(w) = \int_{w_0}^w \frac{dv}{\phi(v)}
\end{align}
for all $w \ge w_0$.
The assumptions on $\phi$ imply it is non-decreasing and $H_{w_0, \phi}(\cdot)$ is strictly increasing as well as the inverse $H^{-1}_{w_0, \phi}(\cdot)$ exists.
Depending on the form of $\phi$, the inverse function $H^{-1}_{w_0, \phi}(\cdot)$ defines a polynomial, subgeometric, or geometric function increasing to infinity.

The first lower bound in total variation uses a technique extended from \citep[Corollary 3.7]{hairer:2009} to adaptive MCMC over all adaptive strategies.

\begin{theorem}
\label{theorem:lb_with_target_tails}
Assume there is a Borel function $W : \X \to [1, \infty)$ and constants $C \in (0, 1]$ and $\kappa \in (0, \infty)$ where
\begin{align}
\pi( W \ge r ) \ge C r^{-\kappa}
\label{eq:target_tail_lb}
\end{align}
holds for all $r \ge 1$ and there is an $\alpha > \kappa$ and a concave function $\phi : (0, \infty) \to (0, \infty)$ such that
\begin{align}
&(\P_\gamma W^{\alpha} )(x) - W(x)^\alpha  \le \phi( W(x)^\alpha )
\label{eq:lb_drift}
\end{align}
holds for all $(x, \gamma) \in \X \times \Y$.
Then for all $t$,
\eq{
\inf_{\Q \in \S(\X, \Y) } \norm{ \A_\Q^{(t)}(\gamma_0, x_0, \cdot) - \pi }_{\text{TV}}
\ge \frac{M}{ \left( H_{W(x_0)^\alpha, \phi}^{-1}(t) \right)^{\frac{\kappa}{\alpha - \kappa}}}
}
where 
\begin{align}
M = C^{\frac{\alpha}{\alpha - \kappa}} \left[ ( \kappa/\alpha )^{\frac{\kappa}{\alpha - \kappa}} - ( \kappa/\alpha )^{\frac{\alpha}{\alpha - \kappa}} \right].
\label{eq:M}
\end{align}
\end{theorem}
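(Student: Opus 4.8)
The plan is to exploit the fact that the drift inequality \eqref{eq:lb_drift} is uniform over the tuning parameters $\gamma \in \Y$: this forces control of the $\alpha$-th moment of $W(X_t)$ for the marginal adaptive process under \emph{every} adaptation plan $\Q$, which is then contrasted with the heavy tail \eqref{eq:target_tail_lb} of $\pi$ through a single well-chosen test set. Fix an arbitrary $\Q \in \S(\X, \Y)$. For any Borel set $A \subseteq \X$ one has, directly from the coupling definition of total variation,
\[
\norm{ \A_\Q^{(t)}(\gamma_0, x_0, \cdot) - \pi }_{\text{TV}} \ge \pi(A) - \A_\Q^{(t)}(\gamma_0, x_0, A),
\]
so I would take $A = \{W \ge r\}$ for a free parameter $r > 0$. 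The first term is at least $C r^{-\kappa}$ by \eqref{eq:target_tail_lb}, while Markov's inequality applied to $W^\alpha$ gives $\A_\Q^{(t)}(\gamma_0, x_0, \{W \ge r\}) = \Prob(W(X_t)^\alpha \ge r^\alpha) \le r^{-\alpha}\, \E[W(X_t)^\alpha]$.

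The heart of the argument is a uniform bound on $a_t := \E[W(X_t)^\alpha]$. Conditioning first on $\H_{t-1}$ and $\Gamma_t$ and invoking \eqref{eq:lb_drift},
\[
\E[W(X_t)^\alpha \mid \H_{t-1}] = (\P_{\Gamma_t} W^\alpha)(X_{t-1}) \le W(X_{t-1})^\alpha + \phi(W(X_{t-1})^\alpha),
\]
where the crucial point is that the right-hand side does not involve $\Gamma_t$, so the adaptation plan is irrelevant. Taking expectations and using Jensen's inequality with the concave $\phi$ yields the scalar recursion $a_t \le a_{t-1} + \phi(a_{t-1})$ started at $a_0 = W(x_0)^\alpha$ (finiteness propagates since $\phi$ is finite-valued). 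I would then compare this with the solution $g(s) := H^{-1}_{W(x_0)^\alpha, \phi}(s)$ of $g' = \phi \circ g$, $g(0) = W(x_0)^\alpha$: because $g$ is increasing and $\phi$ is non-decreasing, $g(t) - g(t-1) = \int_{t-1}^{t} \phi(g(s))\,ds \ge \phi(g(t-1))$, so a one-step induction using that $w \mapsto w + \phi(w)$ is non-decreasing gives $a_t \le g(t) = H^{-1}_{W(x_0)^\alpha, \phi}(t)$ for every $t$.

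Combining the three estimates, for all $r > 0$,
\[
\norm{ \A_\Q^{(t)}(\gamma_0, x_0, \cdot) - \pi }_{\text{TV}} \ge C r^{-\kappa} - H^{-1}_{W(x_0)^\alpha, \phi}(t)\, r^{-\alpha}.
\]
Writing $B := H^{-1}_{W(x_0)^\alpha, \phi}(t)$ and maximizing the right-hand side over $r$, the unique critical point is $r_\star = (\alpha B / (\kappa C))^{1/(\alpha - \kappa)}$, and substituting $r_\star$ collapses the bound to $M\, B^{-\kappa/(\alpha - \kappa)}$ with $M$ exactly as in \eqref{eq:M}; positivity of $M$ is immediate since $\kappa/\alpha \in (0,1)$ is raised to the smaller exponent $\kappa/(\alpha-\kappa) < \alpha/(\alpha-\kappa)$. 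As this holds for every $\Q \in \S(\X,\Y)$, it passes to the infimum, giving the claim.

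I expect the main obstacle to be the moment estimate, and more precisely verifying that the discrete Gr\"onwall comparison with $H^{-1}_{W(x_0)^\alpha,\phi}$ is legitimate on all of $\Z_+$: this rests on the observation that a positive concave $\phi$ is dominated by an affine function, hence $\int^\infty dv/\phi(v) = \infty$, so $H_{w_0,\phi}$ is a bijection onto $[0,\infty)$ and $H^{-1}_{w_0,\phi}$ is defined for every $t$. The test-set inequality, Markov's inequality, and the one-variable optimization are otherwise routine.
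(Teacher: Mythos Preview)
Your proposal is correct and follows essentially the same route as the paper's own proof: both establish the scalar recursion $a_t \le a_{t-1} + \phi(a_{t-1})$ via Jensen and the uniform drift \eqref{eq:lb_drift}, compare it by induction to the ODE solution $H^{-1}_{W(x_0)^\alpha,\phi}$, apply Markov's inequality to control $\Prob(W(X_t)\ge r)$, and then optimize $Cr^{-\kappa} - B r^{-\alpha}$ over $r$ to obtain $M$. Your added remark that $\int^\infty dv/\phi(v)=\infty$ (ensuring $H^{-1}_{w_0,\phi}$ is defined on all of $[0,\infty)$) is a useful point the paper leaves implicit.
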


\begin{proof}
Let $V(x) = W^\alpha(x)$, and let $t \in \Z_+$, so then we have
\[
\E\left( V(X_{t + 1}) | \H_t \right) - V(X_{t})
\le \phi(V(X_{t})).
\]
Since $\E\left[ V(X_{1}) \right] - V(x_0) \le \phi( V(x_0) )$, then assume by induction for all $1 \le k \le t-1$,
$
\E\left[ V(X_{k + 1}) \right] - \E[ V(X_{k}) ] \le \phi(\E[ V(X_{k}) ] )
$
and $\E[ V(X_{k}) ] < \infty$.
By the induction hypothesis and Jensen's inequality,
\begin{align}
\E\left[ V(X_{t + 1}) \right] - \E\left[ V(X_{t}) \right]
&= \E\left[ 
\E\left( V(X_{t + 1}) | \H_t \right) - V(X_{t})
\right]
\nonumber
\\
&\le \E\left(
\phi[ V(X_{t}) ]
\right)
\nonumber
\\
&\le
\phi( \E\left[ V(X_{t} ) \right] ).
\label{eq:lb_lyap_ineq}
\end{align}

The inverse function theorem implies the derivative 
\[
\frac{d}{ds} H^{-1}_{V(x_0), \phi}(s) = \phi(H^{-1}_{V(x_0), \phi}(s)).
\]
Since $H^{-1}_{V(x_0), \phi}(0) \ge V(x_0)$, assume by induction $H^{-1}_{V(x_0), \phi}(k) \ge \E[ V(X_{k}) ]$ for all $k \le t$.
Since $\phi$ is non-decreasing combined with the fundamental theorem of calculus and \eqref{eq:lb_lyap_ineq},
\begin{align*}
H^{-1}_{V(x_0), \phi}( t + 1 )
= H^{-1}_{V(x_0), \phi}( t ) + \int_{t}^{t + 1}  \phi( H^{-1}_{V(x_0), \phi}( s ) ) ds
&\ge H^{-1}_{V(x_0), \phi}( t ) + \phi( H^{-1}_{V(x_0), \phi}( t ) )
\\
&\ge \E\left[ V(X_{t}) \right] + \phi( \E\left[ V(X_{t}) \right] )
\\
&\ge \E\left[ V(X_{t + 1}) \right].
\end{align*}
By Markov's inequality,
\[
\Prob( W(X_t) \ge r)
\le \frac{ \E\left[ W(X_t)^\alpha \right] }{ r^{\alpha} }
\le \frac{ H_{W(x_0)^\alpha, \phi}^{-1}(t) }{ r^{\alpha} }.
\]
We choose $r^{\alpha - \kappa} = (\alpha / \kappa) \cdot H_{W(x_0)^\alpha, \phi}^{-1}(t) / C$ so that $r \ge 1$ and gives the lower bound
\eq{
\norm{ \A_{\Q}^{(t)}(\gamma_0, x_0, \cdot) - \pi }_{\text{TV}}
&\ge \pi(W \ge r) - \Prob(W(X_t) \ge r)
\ge \frac{C}{r^\kappa} - \frac{ H_{W(x_0)^\alpha, \phi}^{-1}(t) }{ r^{\alpha} }
\\
&\ge \frac{M}{ \left( H_{W(x_0)^\alpha, \phi}^{-1}(t) \right)^{\frac{\kappa}{\alpha - \kappa} } }.
}
\end{proof}

Assumption \eqref{eq:lb_drift} of Theorem~\ref{theorem:lb_with_target_tails} requires the Markov family $(\P_\gamma)_{\gamma \in \Y}$ to satisfy a simultaneous growth condition for some concave function $\phi$.
We look at some concrete examples of concave functions that lead to common subgeometric convergence rates that have been explored previously for upper bounds \citep{douc:etal:2004}.

\begin{example}
(Polynomial lower bounds)
Assume \eqref{eq:target_tail_lb} holds with constants $C > 0$ and $\kappa = 1$ and additionally, \eqref{eq:lb_drift} holds with function $W(\cdot)$, $\alpha = 2$, and $\phi(w) = c w^\beta$ for some constants $c > 0$ and $\beta \in (0, 1)$. Then a straight-forward calculation gives
$
H^{-1}_{W(x_0)^2, \phi}(t) = \left( (1 - \beta) c t + W(x_0)^{2( 1-\beta )} \right)^{\frac{1}{1 - \beta}}
$
and Theorem~\ref{theorem:lb_with_target_tails} implies for all $t \in \Z_+$,
\eq{
\inf_{\Q \in \S(\X, \Y) }
\norm{ \A_\Q^{(t)}(\gamma_0, x_0, \cdot) - \pi }_{\text{TV}}
\ge \frac{C^2}{ 4 \left( (1 - \beta) c t + W(x_0)^{2( 1-\beta )} \right)^{\frac{1}{1 - \beta}} }.
}
\end{example}

\begin{example}
(Subgeometric lower bounds)
If \eqref{eq:target_tail_lb} holds with constants $C > 0$, $\beta \in (0, 1)$, and $\kappa = 1$ and \eqref{eq:lb_drift} holds with $W(\cdot)$, $\alpha = 2$, and $\phi(x) = c (x + K_\beta)/\log(x + K_\beta )^\beta$ where $K_\beta = \exp(\beta + 1)$, then
\[
H^{-1}_{W(x_0)^2, \phi}(t)
\le (W(x_0)^2 + K_\beta) \exp\left( [ (1 + \beta) c t ]^\frac{1}{1 + \beta} \right).
\]
By Theorem~\ref{theorem:lb_with_target_tails}, then for all $t \in \Z_+$
\eq{
\inf_{\Q \in \S(\X, \Y) }
\norm{ \A_\Q^{(t)}(\gamma_0, x_0, \cdot) - \pi }_{\text{TV}}
\ge \frac{C^2 }{ 4(W(x_0)^2 + K_\beta) } \exp\left( -[ (1 + \beta) c t ]^\frac{1}{1 + \beta} \right).
}
\end{example}

Now we obtain a matching weak lower bound rate as total variation under additional conditions.

\begin{theorem}
\label{theorem:weak_lb_with_target_tails}
Assume \eqref{eq:target_tail_lb} holds with $C, \kappa$ and \eqref{eq:lb_drift} holds with $W(\cdot)$, and $\alpha$, and let $M$ be defined as in \eqref{eq:M}.
Assume $\log(W)$ is Lipschitz continuous.
Then for any $\e \in (0, 1)$,
\begin{align*}
\inf_{\Q \in \S(\X, \Y)} \inf_{\xi \in \C\left[ \A_{\Q}^{(t)}(\gamma_0, x_0, \cdot), \pi \right] } \xi( \{ x, y \in \X \times \X : d(x, y) > \delta_{\e} \})
\ge \frac{(1 - \e) M}{ H_{W(x_0)^\alpha, \phi}^{-1}(t)^{\frac{\kappa}{\alpha - \kappa}} }
\end{align*}
holds for some $\delta_{\e} \in (0, 1)$.
\end{theorem}

\begin{proof}
Let $r \ge 1$ and let $T = \{x \in \X : W(x) \ge r \}$.
Since $\log(W)$ is continuous, then $T$ is closed and by Strassen's theorem (\citep{strassen:1965} and \citep[Corollary 1.28]{villani:2003}), then for any $\delta > 0$,
\[
\inf_{ \xi \in \C\left[ \A_{\Q}^{(t)}(\gamma_0, x_0, \cdot), \pi \right] } 
\xi( \{ x, y \in \X \times \X : d(x, y) > \delta \})
\ge \pi(T) - \A_{\Q}^{(t)}(\gamma_0, x_0, T^\delta)
\]
where $T^\delta = \{y \in \X : \text{dist}(y, T) \le \delta \}$ and $\text{dist}(y, T) = \inf_{x \in T} d(x, y)$.
Thus, we will find a discrepancy between $\pi\left( \{ W \ge r \} \right)$ and $\A_{\Q}^{(t)}\left( \gamma_0, x_0, \{ W \ge (1 - \e) r \} \right)$ for small $\e$ and the intuition is illustrated in Figure~\ref{figure:diagram}.
\begin{figure}
\centering
\includegraphics[width=.9\linewidth]{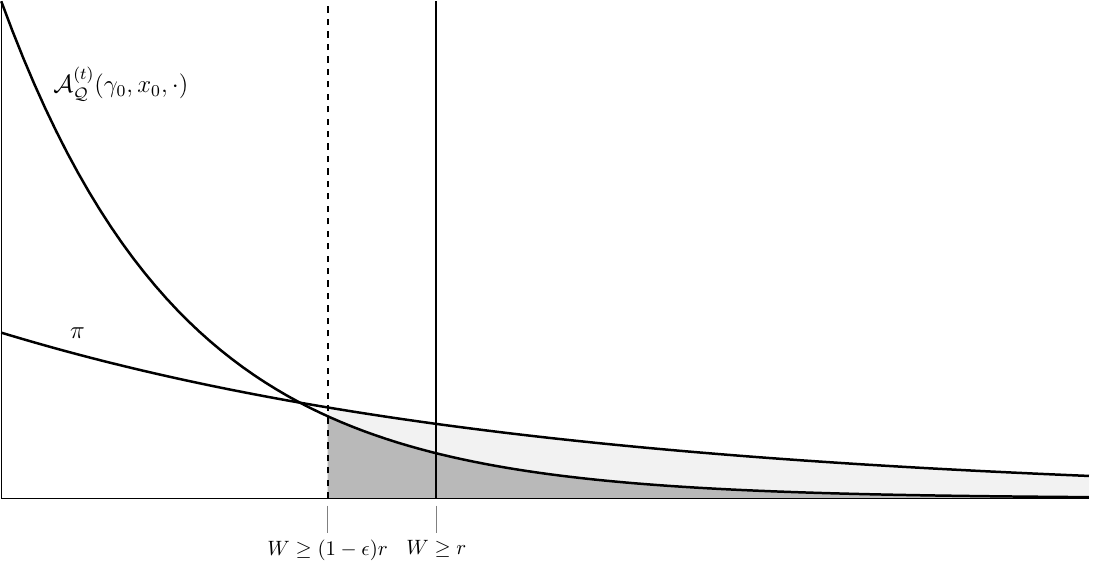}
\caption{
The diagram illustrates intuition for a discrepancy between the set $\{ W \ge r \}$ for the adaptive process and the target measure and also $\{ W \ge (1 - \e) r \}$  for small $\e$.}
\label{figure:diagram}
\end{figure}

For $\e \in (0, 1)$, since $\log(W)$ is Lipschitz continuous, then we can then choose $\delta_\e$ depending on $\e$ sufficiently small so that $W(x) \ge (1 - \e) r$ if $\text{dist}(x, T) \le \delta_\e$ and so
\begin{align*}
\Prob(X_t \in T^{\delta_\e}) 
&\le \Prob( W(X_t) \ge (1 - \e) r ).
\end{align*}
Markov's inequality and \eqref{eq:lb_drift} imply that
\[
\Prob( W(X_t) \ge (1 - \e) r )
\le \frac{\E\left[ W^\alpha(X_t) \right] }{ (1 - \e)^{\alpha}  r^{\alpha} }
\le \frac{ H_{W(x_0)^\alpha, \phi}^{-1}(t) }{ (1 - \e)^{\alpha} r^{\alpha} }.
\]
Optimizing, we get for $t$ large enough so that
\[
r = \left( \frac{\alpha}{\kappa C (1 - \e)^{\alpha} } H_{ W(x_0)^\alpha, \phi}^{-1}(t) \right)^{\frac{1}{\alpha - \kappa}}
\ge 1
\]
and this yields the lower bound
\begin{align*}
\inf_{\xi \in \C\left[ \A_{\Q}^{(t)}(\gamma_0, x_0, \cdot), \pi \right] } \xi( \{ x, y : d(x, y) > \delta_\e \})
&\ge \frac{C}{ r^\kappa } - \frac{H_{W(x_0)^\alpha, \phi}^{-1}(t) }{ (1 - \e)^{\alpha} r^{\alpha} }.
\\
&\ge (1 - \e)^{\frac{\alpha \kappa}{\alpha - \kappa}} \frac{M}{ H_{W(x_0)^\alpha, \phi}^{-1}(t)^{\frac{\kappa}{\alpha - \kappa}} }
\end{align*}
where $M$ is defined by \eqref{eq:M}.
The conclusion follows since we may choose $\e$ so that $(1 - \e)^{\frac{\alpha \kappa}{\alpha - \kappa}} \ge 1-a$ for any $a \in (0, 1)$ and some $\delta_{a}$.
\end{proof}

An interpretation of Theorem~\ref{theorem:weak_lb_with_target_tails} is the best possible rate of convergence for adaptive MCMC satisfying \eqref{eq:lb_drift} for target measure satisfying \eqref{eq:target_tail_lb}.
As can be seen in the proof, the conclusion of Theorem~\ref{theorem:weak_lb_with_target_tails} holds generally for functions $W$ that are not necessarily log-Lipschitz but satisfy for any $\e > 0$ and $r \ge 1$, if $d(x, \{ W \ge r \}) < \delta_\e$ for some $\delta_\e$, then $W \ge (1 - \e) r$.
The assumption on the function $W$ holds in many applications. However, there is a significant drawback to the Wasserstein lower bound being as the constant is non-explicit compared to the explicit lower bound in total variation.

What is surprising about the lower bounds in this section is the requirement only on the Markov family $(\P_\gamma)_{\gamma \in \Y}$ to satisfy \eqref{eq:lb_drift} and does not directly depend on an adaptation strategy.
For example, it is common scenario in adaptive MCMC for the parameter space $\Y$ to be compact.
In this case, the simultaneous growth condition \eqref{eq:lb_drift} often holds if a Markov kernel satisfies some mild regularity conditions and \eqref{eq:lb_drift} holds with only fixed parameters.

\begin{example} 
\label{section:ula}
(Adaptive Unadjusted Langevin algorithm)
Consider the multivariate Student's t-distribution $\pi$ on $\R^d$ with $d \ge 1$ and $v > 0$ degrees of freedom. The Lebesgue density is defined by
\[
D_{\pi}(x)
= \frac{\Gamma( (v + d)/2 )}{\Gamma(v/2) (v \pi)^{d/2}} 
\exp(-U(x))
\]
where $U(x) = \frac{v + d}{2} \log( 1 + \norm{x}^2 / v )$. 
The adapted unadjusted Langevin process $(\Gamma_t, X_t)_{t \ge 0}$ on $(0, 1) \times \R^d$ is defined by
\[
X_{t + 1}
= X_t - \Gamma_{t+1} \nabla U(X_t) + \sqrt{ 2 \Gamma_{t+1} } Z_{t + 1}
\]
where $\Gamma_{t+1} \in (0, 1)$ and $Z_{t + 1}$ is an independent standard normal random vector.
Subgeometric drift conditions have been shown for unadjusted Langevin in the non-adaptive case for heavy tailed target measures \citep{kamatani:2009}.

Let $\alpha > 0$ and $W(x) = (1 + \norm{x}^2)^{(v + d)/2}$.
By Ito's formula, for large enough $\norm{x}$, there is a constant $\e > 0$ such that the second term is bounded using the moment generating function of non-central chi-square random variables by
\eq{
&\E\left[ W^\alpha(X_{t + 1}) | \Gamma_{t + 1} = \gamma, X_t = x \right] - W^\alpha(x)
\\
&= \E\left[ 
\int_{0}^\gamma \nabla W(x)^\alpha \cdot dX_t
\right]
+ \E\left[ 
\int_{0}^\gamma \tr\left( \nabla^2 W(X_t)^\alpha X_t \right) dt
\right]
\\
&\le 
 \alpha ( v + d )  \left[ - \gamma_* (v + 2) + \alpha ( v + d ) + \e \right]
\left( 1 + \norm{x}^2 \right)^{\alpha(v + d)/2 - 1}.
}
It follows that for some constant $C_\alpha > 0$ and for all $x, \gamma$,
\eq{
\E\left[ W^\alpha(X_{t + 1}) | \Gamma_{t + 1} = \gamma, X_t = x \right] - W^\alpha(x)
\le 
C_{\alpha}
W^\alpha(x)^{1 - \frac{2}{\alpha (v + d)}}.
}
So then here
$
H^{-1}_{W(0)^{\alpha}, \phi}(t) = \left( \frac{2}{\alpha (v + d)} C_\alpha t + 1 \right)^{\alpha \frac{v + d}{2}}.
$
For $r \ge 1$, one has the lower bound for some constant $C > 0$
\[
\pi(  W \ge r )
\ge \frac{C}{r^{v/(v + d)}}.
\]
By Theorem~\ref{theorem:weak_lb_with_target_tails}, there are constants $M > 0$ such that with $\alpha$ chosen large enough
\[
\inf_{\Q \in \S(\X, \Y) } \W_{\norm{\cdot} \wedge 1}( \A_{\Q}^{(t)}(\gamma_0, 0, \cdot), \pi)
\ge \frac{M}{ \left( \frac{2}{\alpha (v + d)} C_\alpha t + 1 \right)^{ \frac{v}{\alpha - v/(v + d)} \frac{\alpha}{2}} }.
\]
Of particular interest is that the rate cannot be geometric even when considering weak convergence.
\end{example}

In certain situations, the tail probability decay on $\pi$ in \eqref{eq:target_tail_lb} may be difficult to establish. 
In this case, we consider finding a function that is not integrable with respect to $\pi$, but this results in a trade-off of only having a lower bound for a subsequence.
An analogous result will also hold in total variation.

\begin{theorem}
\label{theorem:weak_lb}
Assume for some function $W : \X \to [1, \infty)$ such that $\log(W)$ is Lipschitz and $\int_\X W d\pi = \infty$ but also for some $\alpha > 1$ and some concave function $\phi : (0, \infty) \to (0, \infty)$,
\begin{align}
(\P_\gamma W^{\alpha})(x) - W(x)^\alpha  \le \phi( W(x)^\alpha )
\end{align}
holds for all $(x, \gamma) \in \X \times \Y$. 
Then for $\beta \in (0, 1)$ with $\alpha > 1 + \beta$, there is a constant $M_* > 0$ and a subsequence $t_n \in \Z_+$ increasing to infinity such that,
\[
\inf_{\Q \in \S(\X, \Y) } 
\W_{d \wedge 1}\left(\A_{\Q}^{(t_n)}(\gamma_0, x_0, \cdot), \pi \right)
\ge \frac{M_*}{ \left( H_{W^\alpha(x_0), \phi}^{-1}(t_n) \right)^{ \frac{1 + \beta}{\alpha - 1 - \beta} } }.
\]
\end{theorem}

\begin{proof}
The inverse function theorem implies the derivative 
$
\frac{d}{ds} H_{ W(x_0)^\alpha, \phi}^{-1}(s) 
= \phi(H^{-1}_{ W(x_0)^\alpha, \phi}(s)).
$
Since $\phi$ has a linear upper bound,
$\frac{d}{dt} \log(H_{ W(x_0)^\alpha, \phi}^{-1}(t) ) \le m_{\alpha, x_0} < \infty$ for some $m_{\alpha, x_0} > 0$.
We then have
\[
H_{ W(x_0)^\alpha, \phi}^{-1}(t + 1)
\le \exp( m_{\alpha, x_0} ) H_{ W(x_0)^\alpha, \phi}^{-1}(t).
\]
Since $\int_\X W d\pi = \infty$, we can choose a constant $C$ so that
$C \exp( m_{\alpha, x_0} )^{-\frac{1 + \beta}{\alpha - 1 - \beta}} - (1 - \e)^{-\alpha} > 0$ and there is a sequence $(r_n)_n$ with $\lim_n r_n = \infty$ such that with $T_{n} = \{x : W(x) \ge r_n \}$,
\[
\pi(T_n)
\ge \frac{C}{r_n^{1 + \beta}}.
\]
For some $\e, \delta_\e \in (0, 1)$ as in Theorem~\ref{theorem:weak_lb_with_target_tails}, we have by Markov's inequality
\begin{align*}
\frac{1}{\delta_\e} \W_{d \wedge 1}\left(\A_{\Q}^{(t_n)}(\gamma_0, x_0, \cdot), \pi \right)
&\ge \inf_{\xi \in \C\left[ \A_{\Q}^{(t)}(\gamma_0, x_0, \cdot), \pi \right] } \xi( \{ x, y : d(x, y) > \delta_\e \})
\\
&\ge \frac{C}{r_n^{1 + \beta}} - \frac{H_{W(x_0)^\alpha, \phi}^{-1}(t) }{ (1 - \e)^{\alpha} r_n^{\alpha} }.
\end{align*}
holds for $t \in \Z_+$.
We can construct a subsequence $t_n \in \Z_+$ using $r_n$.
Due to monotonicity, we can choose $t_n \in \Z_+$ so that
\begin{align*}
H_{ W(x_0)^\alpha, \phi}^{-1}(t_n)^{\frac{1}{\alpha - 1 - \beta}}
\le r_n
&\le H_{ W(x_0)^\alpha, \phi}^{-1}(t_n + 1)^{\frac{1}{\alpha - 1 - \beta}}
\\
&\le \exp( m_{\alpha, x_0} )^{\frac{1}{\alpha - 1 - \beta}} H_{ W(x_0)^\alpha, \phi}^{-1}(t_n)^{\frac{1}{\alpha - 1 - \beta}}.
\end{align*}
Plugging in the bounds to $r_n$ with the chosen $C$ ensuring the lower bound is positive, the conclusion follows.
\end{proof}

\section{Subgeometric upper bounds for adaptive MCMC}
\label{section:upper_bounds}

This section is dedicated to studying conditions such that an upper bound convergence rate can be obtained for adaptive MCMC comparable to the lower bounds in the previous section.
We first consider a quantitative alternative to the widely-used diminishing adaptation condition \citep{roberts:rosenthal:2007} that is stronger in the sense that it requires a specified rate of decay.

\begin{definition}
An adaptive process satisfies \textit{quantitative diminishing adaptation} with a function $G : \Z_+ \to (0, \infty)$ strictly decreasing to 0 if for any $\e \in (0, 1)$, there is a nonnegative integer $s_\e$ such that with probability at least $1 - \e$,
\begin{equation}
\norm{\P_{\Gamma_{t + 1}}(x, \cdot) - \P_{\Gamma_t}(x, \cdot)}_{\text{TV}}
\le G(t)
\label{eq:da}
\end{equation}
holds for all $x \in \X$ and $t \ge s_{\e}$ and this event is Borel measurable.
\end{definition}

\noindent 
Since $\X$ is separable, a sufficient condition for Borel measurability here is lower semicontinuity of the total variation as a function in $x$.
Next, we consider a simultaneous version of a subgeometric drift condition on the Markov family.

\begin{definition}
A Markov family $(\P_\gamma)_{\gamma \in \Y}$ satisfies a \textit{simultaneous subgeometric drift condition} if there is a Borel function $V : \X \to [1, \infty)$ and a concave function $\phi : [0, \infty) \to [0, \infty)$ strictly increasing to infinity with $\lim_{v \to \infty} \phi(v)/v = 0$ and a constant $K \ge 0$ such that
\begin{align}
(\P_\gamma V)(x) - V(x)
\le -\phi( V(x) ) + K
\label{eq:sim_subgeo_drift}
\end{align}
holds for every $(x, \gamma) \in  \X \times \Y$.
\end{definition}

Here we assume $\lim_{v \to \infty} \phi(v)/v = 0$ to exclude the geometric case. 
Subgeometric drift conditions for Markov chains has been studied previously \citep{douc:etal:2004,jarner:roberts:2002} but we adjust the previous conditions to hold over feasible tuning parameters $\Y$. We now combine this drift condition with a simultaneous local contracting condition.

\begin{definition}
A Markov family $(\P_\gamma)_{\gamma \in \Y}$ satisfies a \textit{simultaneously locally contracting condition} on a set $S \subseteq \X \times \X$ if there is a constant $\alpha \in (0, 1)$ where
\begin{align}
\norm{ \P_\gamma(x, \cdot) - \P_\gamma(y, \cdot) }_{\text{TV}}
\le 1 - \alpha
\label{eq:local_contractive}
\end{align}
holds for all $(x, y) \in S$ and $\gamma \in \Y$.
\end{definition}

Local coupling conditions have been studied in the subgeometric case for Markov chains \citep{durmus:etal:2016}.
For example, a minorization condition can be used to verify the Markov family is simultaneously locally contracting (see \citep{roberts:rosenthal:2007}).
If the inverse is not well-defined, we will denote the generalized inverse of a real-valued function $F$ by $F^{-1}(\cdot) = \inf\{ x : F(x) \ge \cdot \}$.
Under these three conditions, we can establish an upper bound for the adaptation process. 

\begin{theorem}
\label{theorem:adapt_ub}
Assume the following assumptions hold for the Markov family $(\P_\gamma)_{\gamma \in \Y}$:
\begin{enumerate}
\item $\pi \P_\gamma = \pi$ for all $\gamma \in \Y$.

\item A simultaneously subgeometric drift condition \eqref{eq:sim_subgeo_drift} holds with a Borel function $V(\cdot)$ such that $\int_{\X} V d\pi < \infty$.

\item A simultaneous locally contracting condition \eqref{eq:local_contractive} holds on the set $S = \{ x, y \in \X \times \X : V(x) + V(y) \le R \}$ for $R = \phi^{-1}[ 2K/(1 - \delta) ]$ and some $\delta \in (0, 1)$.
\end{enumerate}
Additionally, assume the quantitative diminishing adaptation condition \eqref{eq:da} holds.
Then for any $\e \in (0, 1)$ and all $t$ sufficiently large,
\[
\norm{ \A_{\Q}^{(T_{\e, t} + t)}((\gamma_0, x_0), \cdot) - \pi }_{\text{TV}}
\le \frac{ 2 + \left[ r(1)/ r(0) \right] \left( V(x_0) + \int_{\X} V d\pi + K T_{\e, t} \right) + C}{ H_{1, \delta \phi}^{-1}\left( \frac{t}{ -\log( H_{1, \delta \phi}^{-1}(t) )/\log(1-\alpha) + 1} \right) } + 2 \e
\]
where $T_{\e, t} = s_{\e} + \lceil (1/G)^{-1}( t^2 / \e ) \rceil$ and
\eq{
&r(\cdot) = \delta \phi(H_{1, \delta \phi}^{-1}(\cdot)),
&C = \frac{r(1)}{r(0)}  \left\{ R + \frac{2 K r(1)}{r(0)} \right\}.
}
\end{theorem}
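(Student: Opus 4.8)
The plan is to run the standard adaptive-MCMC coupling argument (as in Roberts--Rosenthal), but carefully track the subgeometric rate through a deterministic time-change. Fix $\e \in (0,1)$ and a target time horizon. The first step is to establish a \emph{subgeometric convergence bound for each fixed kernel $\P_\gamma$}: from assumptions (1)--(3), the drift \eqref{eq:sim_subgeo_drift}, the local contraction \eqref{eq:local_contractive} on the sublevel set $S$, and $\pi\P_\gamma = \pi$, one gets a coupling of $\P_\gamma^n(x,\cdot)$ and $\pi$ whose coupling-failure probability is bounded by a term of the form $\bigl[\text{poly in }V(x_0),\int V\,d\pi, K\bigr] / H_{1,\phi}^{-1}(\text{time-changed }n)$, with the time change $n \mapsto n/\bigl(-\log H_{1,\phi}^{-1}(n)/\log(1-\alpha) + 1\bigr)$ accounting for the number of returns to the small set needed before a successful coupling. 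This is where the quantities $r(\cdot) = \phi(H_{1,\phi}^{-1}(\cdot))$, $R = \phi^{-1}(2K/(1-\delta))$, and $C$ enter, via the expected hitting-time and excursion estimates for the subgeometric drift --- essentially a quantitative version of the Douc--Fort--Moulines--Soulier bookkeeping adapted to hold simultaneously over $\gamma \in \Y$ because all constants in (1)--(3) are $\gamma$-uniform.

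The second step is the \emph{coupling across the adaptive process}. Run the adaptive chain for a burn-in of length $T_{\e,t}$; by quantitative diminishing adaptation \eqref{eq:da}, with probability at least $1 - \e/2$ the kernels change by at most $G(s)$ at each step $s \ge s_{\e/2}$, and the choice $T_{\e,t} = s_{\e/2} + (1/G)^{-1}(2t^2/\e)$ forces $\sum_{s} G(T_{\e,t}+s) \le \e/2$ over the remaining $t$ steps (a union/summation bound, since $G$ is summable at that scale by construction of $(1/G)^{-1}$). On that event, the adaptive chain over the window $[T_{\e,t}, T_{\e,t}+t]$ can be coupled to a chain that uses a \emph{frozen} kernel $\P_{\Gamma_{T_{\e,t}}}$ with total coupling error at most $\e/2 + \e/2 = \e$ from the two approximations; then apply the fixed-kernel bound from step one to that frozen chain, started from the law of $X_{T_{\e,t}}$. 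The drift bound controls $\E[V(X_{T_{\e,t}})]$ by $V(x_0) + K T_{\e,t}$ (iterating \eqref{eq:sim_subgeo_drift} and dropping $-\phi \le 0$), which, averaged against the stationarity-propagated $\int V\,d\pi$ term from the other marginal, produces the numerator $\delta + [r(1)+1](V(x_0) + \int V\,d\pi + K T_{\e,t}) + C$.

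The main obstacle is the first step: making the subgeometric mixing bound for a fixed $\P_\gamma$ genuinely \emph{quantitative and explicit}, with the precise time-change $t/(-\log H_{1,\phi}^{-1}(t)/\log(1-\alpha)+1)$, rather than just asymptotic. The delicate point is that each coupling attempt at the small set $S$ succeeds with probability $\ge \alpha$ independently, so one needs roughly $\log(\text{target accuracy})/\log(1-\alpha)$ attempts, each separated by an excursion whose length is controlled only in expectation by the subgeometric drift --- so one must interleave a Markov-inequality bound on excursion lengths with the geometric count of attempts, and optimize the split of the budget $t$ between "number of attempts" and "time per excursion." Getting the constants $R = \phi^{-1}(2K/(1-\delta))$ and the $r(1)/r(0)$ factor in $C$ to come out exactly requires care with the sublevel threshold $2K/(1-\delta)$ defining $S$ and with the first-return moment estimate $\E_x[\tau_S] \lesssim V(x)/\phi(\cdot)$-type bound; everything else (the diminishing-adaptation union bound, the drift iteration for $\E[V(X_{T_{\e,t}})]$, and assembling the final fraction) is routine bookkeeping once that fixed-kernel estimate is in hand.
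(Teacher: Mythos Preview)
Your proposal is correct and follows essentially the same approach as the paper's proof: a triangle-inequality split via a finite-adaptation (frozen-kernel) process at time $T_{\e,t}$, with the first term controlled by a synchronous coupling plus the quantitative diminishing adaptation (yielding the $\e$), and the second term controlled by a uniform-in-$\gamma$ subgeometric mixing bound obtained from successive hitting times $\tau_k$ to $S$, Markov/Jensen on the convex function $H_{1,\phi}^{-1}$, the Douc--Fort--Moulines--Soulier moment estimates, and the choice $m_t \approx \log H_{1,\phi}^{-1}(t)/\log(1/(1-\alpha))$ that you describe as ``optimizing the split of the budget $t$.'' The only cosmetic difference is that the paper bounds the per-step coupling failure by $tG(T_{\e,t}) \le \e/(2t)$ and then multiplies $(1-\e/(2t))^t \cdot \Prob(B_\e)$, rather than summing $\sum_s G(T_{\e,t}+s)$ directly as you sketch, but since $G$ is decreasing these are equivalent.
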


There are many ways to verify quantitative diminishing adaptation \eqref{eq:da} through concentration inequalities and almost sure convergence.
One specific way to verify quantitative diminishing adaptation \eqref{eq:da} when $\Y$ is a Euclidean space is through Lipschitz continuity properties of $\gamma \mapsto \P_\gamma$ combined with choosing an adaptation strategy so that $( \Gamma_{t + 1} - \Gamma_t ) / G(t)$ convergences almost surely. 
For example, if for each $x \in \X$, $\gamma \mapsto \P_{\gamma}(x, \cdot)$ is Lipschitz with constant $\beta_x$, then
\begin{align*}
\norm{\P_{\Gamma_{t + 1}}(x, \cdot) - \P_{\Gamma_t}(x, \cdot)}_{\text{TV}}
\le \left( \sup_{x \in \X} \beta_x \right) \norm{ \Gamma_{t + 1} - \Gamma_t}
\end{align*}
holds for all $x \in \X$ and \eqref{eq:da} follows immediately if $\sup_{x \in \X} \beta_x < \infty$.
The Lipschitz continuity used above can be verified under certain conditions when adapting the covariance in Gaussian random-walk proposals for Metropolis-Hastings \citep{andrieu:moulines:2006}.
A concrete example to satisfy the almost sure convergence of $( \Gamma_{t + 1} - \Gamma_t ) / G(t)$ is the widely-used adaptation with stochastic approximation when $\Gamma_{t + 1} = \Gamma_t + h_{t} L_t(\Gamma_0, X_0, \ldots, \Gamma_{t}, X_t)$ such that $L_t$ is bounded and $h_{t}$ is a step size tending towards zero with a specified rate.

Theorem~\ref{theorem:adapt_ub} requires satisfying quantitative diminishing adaptation \eqref{eq:da} with a sufficiently fast rate. 
Table~\ref{table:adapt_ub} compares approximate upper bounds for different combinations of $\phi(\cdot)$ and $G(\cdot)$.
The upper and lower bounds may be also combined and in particular, Theorem~\ref{theorem:adapt_ub} can guarantee the adaptive process approximately achieves the lower bound rate if the adaptation diminishes sufficiently fast.
For example, if in addition to the assumptions of Theorem~\ref{theorem:adapt_ub}, there are constants $C, \kappa > 0$ such that 
\begin{align*}
&\pi( V \ge r ) \ge C r^{-\kappa},
&(\P_\gamma V^{2\kappa})(x) - V(x)^{2\kappa}  \le \phi( V(x)^{2\kappa} )
\end{align*}
holds for every $(x, \gamma) \in  \X \times \Y$. Then Theorem~\ref{theorem:lb_with_target_tails} and Theorem~\ref{theorem:adapt_ub} imply some constants $M^*, M_*, \alpha > 0$ such that
\begin{align*}
\frac{M_*}{H_{V(x_0)^{2\kappa}, \phi}^{-1}(T_{\e, t} + t) }
\le \norm{ \A_{\Q}^{( T_{\e, t} + t )}((\gamma_0, x_0), \cdot) - \pi }_{\text{TV}}
\le \frac{M^* T_{\e, t}}{ H_{1, \delta \phi}^{-1}\left( \frac{t}{ -\log( H_{1, \delta \phi}^{-1}(t) )/\log(1-\alpha) + 1} \right) } + 2 \e
\end{align*}
holds for large enough $t$ and $\e \in (0, 1)$. 
More interestingly, Theorem~\ref{theorem:weak_lb_with_target_tails} implies this also holds with alternative constants when $\X = \R^d$ with the total variation replaced by the Wasserstein distance corresponding to weak convergence.


\begin{table}
\begin{center}
\renewcommand{\arraystretch}{2}
\begin{tabular}[t]{| c | c |}
\hline
\multicolumn{2}{|c|}{Example of upper bound rate from Theorem~\ref{theorem:adapt_ub}} 
\\
\hline
G(t) 
& $\phi(w) = c w^{\beta}, \beta \in (0, 1)$ 
\\ 
\hline
$\exp(-\alpha t), \alpha > 0$ 
& $\propto \frac{\log(t)}{( 1 + (1 - \beta) c t / \log(t))^{1/(1 - \beta)} }$ 
\\
\hline
$\exp(-t^\alpha), \alpha \in (0, 1)$ 
& $\propto \frac{ \log(t)^{1/\alpha} }{ ( 1 + (1 - \beta) c t / \log(t))^{1/(1 - \beta)} }$ 
\\
\hline
$t^{-\alpha}, \alpha > 1$ 
& $\propto \frac{ t^{2/\alpha} }{ ( 1 + (1 - \beta) c t / \log(t))^{1/(1 - \beta)} }$ 
\\
\hline
\end{tabular}
\end{center}
\caption{Upper bound convergence rate comparisons from Theorem~\ref{theorem:adapt_ub} for different combinations of $\phi(\cdot)$ and $G(\cdot)$. The table entries specify a convergence rate upper bound up to an explicit constant.}
\label{table:adapt_ub}
\end{table}

\begin{proof}[Proof of Theorem~\ref{theorem:adapt_ub}]
Let $(\Gamma_t, X_t)_{t \ge 0}$ be an adaptive process initialized at $x_0, \gamma_0$.
We first specify a finite adaptation plan $\Q^{T}$ with a time $T \in \Z_+$ defining a stopping point of adaptation.
This defines an adaptive process $(\Gamma'_t, X'_t)_{t \ge 0}$ initialized at $x_0, \gamma_0$ where for all $t \ge T$, $\Gamma'_t = \Gamma'_{T}$ and $(\Gamma'_t, X'_t) | (\Gamma'_s, X'_s)_{s \le t-1} \sim \delta_{\Gamma'_{T}}(\cdot) \P_{\Gamma'_{T}}(X'_{t-1}, \cdot)$ and $\delta_{\Gamma'_{T}}$ denotes the Dirac measure at $\Gamma'_{T}$.
Using this finite adaptation process, we have an upper bound via the triangle inequality
\begin{align}
&\norm{ \A_{\Q}^{(T + t)}((\gamma_0, x_0), \cdot) - \pi }_{\text{TV}} \nonumber
\\
&\le \norm{ \A_{\Q}^{(T + t)}((\gamma_0, x_0), \cdot) - \A_{\Q^T}^{(T + t)}((\gamma_0, x_0), \cdot) }_{\text{TV}}
+ \norm{ \A_{\Q^{T}}^{(T + t)}((\gamma_0, x_0), \cdot) - \pi }_{\text{TV}}.
\label{eq:triangle_ub}
\end{align}

We will bound each term on the right hand side of \eqref{eq:triangle_ub} separately.
For the first term in \eqref{eq:triangle_ub}, fix $\e \in (0, 1)$ and choose $T = T_{\e, t}$.

Let 
\[
B_{\e} = \{ \norm{\P_{\Gamma_{t + 1}}(x, \cdot) - \P_{\Gamma_t}(x, \cdot)}_{\text{TV}} \le G(t), x \in \X, t \ge s_{\e} \}
\] 
be the set where \eqref{eq:da} holds and is Borel measurable by assumption.
Using the triangle inequality, we have that
\begin{align*}
&\sup_{x \in \X}\norm{\P_{\Gamma_{T + t}}(x, \cdot) - \P_{\Gamma_{T}}(x, \cdot)}_{\text{TV}}
\\
&\le \sum_{s = 1}^{t} \sup_{x \in \X}\norm{\P_{\Gamma_{T + s}}(x, \cdot) - \P_{\Gamma_{T + s - 1}}(x, \cdot)}_{\text{TV}}
\\
&\le t G(T)
\end{align*}
holds on $B_{\e}$.
Let $D = \{ u, v \in \X : u = v \}$ denote the diagonal set.
Proposition~\ref{prop:measurable_selection} implies there exists a family of Borel measurable joint kernels $(\K_{\gamma \gamma'} )_{\gamma, \gamma' \in \Y}$ on $\X \times \X$ such that for $1 \le s \le t$
\[
X_{T + s}, X'_{T + s} \mid \Gamma_{T + s} = \gamma, \Gamma_{T} = \gamma', X_{T + s - 1} = x, X'_{T + s - 1} = x' \sim \K_{\gamma, \gamma'}((x, x'), \cdot)
\]
and on the set $B_{\e}$,
\[
\K_{\Gamma_{T + s}, \Gamma_{T}}((x, x), D)
\ge 1 - \frac{\e}{t}
\]
and the diminishing adaptation assumption assumes measurability holds for all $x \in \X$.
Using the synchronous coupling for $\Gamma_t$ and $\Gamma'_t$ and since both processes are initialized at the same point, we can construct a coupling where $X_{s} = X_{s}'$ for $s \le T$ and using these joint kernels after $T$.
We then have the lower bound
\begin{align*}
\E\left( \prod_{k = 1}^{T + t} I_D(X_{k}, X'_{k}) I_{B_{\e}} \right)
&\ge \E\left( \prod_{k = 1}^{T + t - 1} I_D(X_{k}, X'_{k}) 
\inf_{x \in \X} \int_{y, y' \in D} \K_{\Gamma_{T + t}, \Gamma_{T}}((x, x), (dy, dy')) I_{B_{\e}} \right) 
\\
&\ge \left( 1 - \frac{\e}{t} \right) \E\left( \prod_{k = 1}^{T + t - 1} I_D(X_{k}, X'_{k}) I_{B_{\e}} \right)
\\
&\ge \left( 1 - \frac{\e}{t} \right)^t \Prob(B_{\e})
\\
&\ge \left( 1 - \e \right) \Prob(B_{\e})
\\
&\ge \left( 1 - \e \right)^2.
\end{align*}
Then it follows that the optimal coupling is controlled so that
\begin{align*}
\norm{ \A_{\Q}^{(T_{\e, t} + t)}((\gamma_0, x_0), \cdot) - \A_{\Q^{T_{\e, t}}}^{(T_{\e, t} + t)}((\gamma_0, x_0), \cdot) }_{\text{TV}}
&\le 
\Prob\left( X_{T + t} \not= X'_{T + t} \right)
\\
&\le 2 \e.
\end{align*}

To bound the second term in \eqref{eq:triangle_ub}, the following is adapted from previous arguments for subgeometric upper bounds for non-adapted Markov chains \citep{durmus:etal:2016}, but modified for adaptive MCMC.
The constants here are made explicit, but the general technique is not new.
There is a Borel measurable conditional total variation distance by Proposition~\ref{prop:measurable_selection} so that
\eq{
\norm{ \A_{\Q^{T}}^{(T + t)}((\gamma_0, x_0), \cdot) - \pi }_{\text{TV}}
&\le \E \left[ \norm{ \P_{\Gamma_{T}}^{t}(X_{T}, \cdot) - \pi }_{\text{TV}} \right].
}
Since $\phi$ is concave, it is subadditive so $\phi(V(x) + V(y)) \le \phi( V(x) ) + \phi( V(y) )$.
Since $\phi$ is strictly increasing, by the drift condition,
\eq{
(\P_\gamma V)(x) + (\P_\gamma V)(y) - [ V(x) + V(y) ]
&\le -[ \phi( V(x) ) + \phi( V(y) ) ] + 2K
\\
&\le -[ \phi( V(x) + V(y) ) ] + 2 K
}
holds for all $x, y \in \X$.
Using Lemma~\ref{lemma:simult_subgeo_drift},
\eq{
(\P_\gamma V)(x) + (\P_\gamma V)(y) - [ V(x) + V(y) ]
\le -\delta [ \phi( V(x) + V(y) ) ] + 2 K I_{S}(x, y).
}

Let $\tau_S = \inf\{ n \ge 1 : (X_n, Y_n) \in S \}$ be the first hit time to the set $S$.
For $n \in \Z_+$, let $\theta_n$ denote the shift operator applied $n$ times so that $\theta_n(X_{i}) = X_{i + n}$ for all $i \in \Z_+$.
Define the successive hit times to $S$ by 
\eq{
&\sigma_1 = \inf\{ n \ge 0 : (X_n, Y_n) \in S \},
&\sigma_2 = \tau_S \circ \theta_{\sigma_1},
\\
&\sigma_{n + 1} = \tau_S \circ \theta_{\sigma_1 + \cdots \sigma_n},
&\tau_n = \sum_{k = 1}^{n} \sigma_k
}
for each $n \in \Z_+$.
The inverse function theorem implies the derivative 
$
r(s)
= \frac{d}{ds} H^{-1}_{1, \delta \phi}(s) 
= \delta \phi(H^{-1}_{1, \delta \phi}(s))
$
for $s \ge 0$.
Thus, $H_{1, \delta \phi}^{-1}$ is convex since its derivative is monotone increasing by Lemma~\ref{lemma:H_properties}.
By Markov's inequality and Jensen's inequality,
\eq{
\Prob(\tau_m \ge t)
&\le \frac{ \E\left[ H_{1, \delta \phi}^{-1}\left( \frac{ 1 }{m} \sum_{k = 1}^m \sigma_k \right) \right]
}{
H_{1, \delta \phi}^{-1}(t/m)
}
\\
&\le \frac{ \frac{ 1 }{m} \sum_{k = 1}^m \E\left[ H_{1, \delta \phi}^{-1}( \sigma_k ) \right]
}{
H_{1, \delta \phi}^{-1}(t/m)
}.
}

For any $t, m \in \Z_+$ with $t \ge m$, the local contraction condition \eqref{eq:local_contractive} implies an upper bound via a coupling argument with \citep[Lemma 3.1]{jarner:tweedie:2001} so that for all $\gamma \in \Y$ and $x, y \in \X$,
\begin{align*}
\norm{ \P_{\gamma}^t(x, \cdot) - \P_{\gamma}^t(y, \cdot) }_{\text{TV}}
&\le (1 - \alpha)^m + \Prob\left( \sum_{k = 1}^m \sigma_k \ge t \right)
\\
&\le (1 - \alpha)^m 
+ \frac{ \frac{ 1 }{m} \sum_{k = 1}^m \E\left[ H_{1, \delta \phi}^{-1}( \sigma_k ) \right]
}{
H_{1, \delta \phi}^{-1}(t/m)
}.
\end{align*}

By \citep[Proposition 2.2]{douc:etal:2004},
\[
\sup_{x, y \in S} \E_{x, y} \left( \sum_{i = 0}^{\tau_S - 1} r(i) \right)
\le \phi^{-1}( 2 K/(1 - \delta) ) + \frac{2 K r(1)}{r(0)}.
\]
Since $\phi$ is concave, we have that $h(\cdot) = \log( r(\cdot) )$ is concave and so centering with $g(\cdot) - g(0)$ is subadditive implying after exponentiating $r(s + t) \le r(s)r(t) / r(0)$ for all $t, s \ge 0$.
We then have the upper bound for $k \ge 2$,
\eq{
\E\left[ H_{1, \delta \phi}^{-1}( \sigma_k ) \right] - 1
&= \E\left[ \E_{X_{\tau_{k-1}}, Y_{\tau_{k-1}}} \left( \int^{\sigma_k}_0 r(s) ds \right) \right]
\\
&\le \E\left[ \E_{X_{\tau_{k-1}}, Y_{\tau_{k-1}}} \left( \sum_{i = 0}^{\tau_S - 1} r(i + 1) \right) \right]
\\
&\le \frac{r(1)}{r(0)} \E\left[ \E_{X_{\tau_{k-1}}, Y_{\tau_{k-1}}} \left( \sum_{i = 0}^{\tau_S - 1} r(i) \right) \right]
\\
&\le \frac{r(1)}{r(0)} \left\{ R + \frac{2 K r(1)}{r(0)} \right \}.
}
For $k = 1$, similarly, we have
\eq{
\E\left[ H_{1, \delta \phi}^{-1}( \sigma_1 ) \right] - 1
\le \frac{r(1)}{r(0)} \left\{ V(x) + V(y) + \frac{2 K r(1)}{r(0)} \right \}.
}
Combining these upper bounds,
\eq{
\E\left[ H_{1, \delta \phi}^{-1}\left( \frac{1}{m} \sum_{k = 1}^m \sigma_k \right) \right]
\le 1 + \frac{r(1)}{r(0)} \left\{  V(x) + V(y) + R + \frac{r(1)}{r(0)} ( 2 K ) \right\}.
}
The simultaneous subgeometric drift condition \eqref{eq:sim_subgeo_drift} implies
\[
\E\left[ V( X_{T} ) \right]
\le V(x_0) + K T.
\]
Choosing $m \equiv m_t = \lceil \log( H_{1, \delta \phi}^{-1}(t) ) / \log(1/(1-\alpha)) \rceil$, we have the upper bound
\eq{
&\norm{ \A_{\Q^{T}}^{(T + t)}((\gamma_0, x_0), \cdot) - \pi }_{\text{TV}}
\\
&\le (1 - \alpha)^{m_t} 
+ \frac{ 1 + \frac{r(1)}{r(0)} \left\{  V(x_0) + K T + \int V d\pi + R + \frac{r(1)}{r(0)} ( 2 K ) \right\} }{ H_{1, \delta \phi}^{-1}(t/m_t) }
\\
&\le \frac{ 2 + \frac{r(1)}{r(0)} \left\{  V(x_0) + K T + \int V d\pi \right\} + C  }{ H_{1, \delta \phi}^{-1}(t/m_t) }.
}
\end{proof}


\section{Example: adaptive Metropolis-Hastings independence sampler}
\label{section:toy_example}

In many cases, it is difficult to choose a proposal for Metropolis-Hastings that approximately matches the tail behavior of a complex target measure \citep{schmidler:woodard:2011} and adaptive MCMC is often employed.
The point of this toy example is to concretely demonstrate this scenario. 
We will use the upper and lower bounds on the convergence to investigate the sensitivity to different adaptation strategies.
Consider the target measure $\pi(dx) = \exp(-x) I_{[0, \infty)}(x) dx$.
Let $(\gamma_*, \gamma^*)$ be the interval for some potential tuning parameters $1 < \gamma_* <  \gamma^*$ and consider a Metropolis-Hastings Markov chain with independent proposal
$
\gamma \exp(-\gamma x) I_{[0, \infty)}(x)
$
and Markov kernel defined for $(x, \gamma) \in [0, \infty) \times (\gamma_*, \gamma^*)$ and all Borel sets $A \subseteq [0, \infty)$ by
\begin{align}
\P_\gamma(x, A)
= \int_A a_\gamma(x, y) \gamma \exp(-\gamma y) dy
+ \delta_{x}(A) R_\gamma(x)
\label{eq:mhi}
\end{align}
where the acceptance function is $a_\gamma(x, y) = \exp\left[ (\gamma - 1) (y - x) \right] \wedge 1$ and the rejection probability is $R_\gamma(x) = 1 - \int_0^\infty a_\gamma(x, y) \gamma \exp(-\gamma y) dy$.
Since we restrict $\gamma > 1$, the tail probabilities of the proposal and the Metropolis-Hastings kernel are lighter than the target.
Due to this restriction, we will have a polynomial lower bound over any possible adaptation plan.

\begin{proposition}
\label{proposition:lb_imh}
Let $\A_{\Q}^{(t)}(\gamma_0, x_0, \cdot)$ be the marginal of the adaptive independent Metropolis-Hastings process at time $t \in \Z_+$ from \eqref{eq:mhi} with adaptation parameter set $(\gamma_*, \gamma^*)$ and initialized at $x_0, \gamma_0 \in (0, \infty) \times (\gamma_*, \gamma^*)$.
Then for any $\e \in (0, 1)$, there is a $\delta_{\e} \in (0, 1)$
\begin{align*}
\inf_{\Q \in \S([0, \infty), (\gamma_*, \gamma^*)) } \inf_{\xi \in \C\left[ \A_{\Q}^{(t)}(\gamma_0, x_0, \cdot), \pi \right] } \xi( \{ x, y \in \X \times \X : |y - x| > \delta_{\e} \})
\ge (1 - \e) \frac{M_*}{ \left( c_* t + \exp(\alpha x_0) \right)^{\frac{1}{\alpha - 1}} }
\end{align*}
for $\alpha \in (1, \gamma_*)$ with $M_* = \alpha^{-1/(\alpha - 1)} - \alpha^{-\alpha/(\alpha - 1)}$ and $c_{*} = \gamma^* / (\alpha - 1)
+ \gamma_* / (\gamma_* - \alpha)
+ \gamma^* - 1$.
\end{proposition}

\begin{proof}
Define $W(x) = \exp(x)$, and we have by a standard computation $\pi(W(x) \ge r) = r^{-1}$.
Assume $1 < \gamma_* < \gamma$.
For $\alpha < \gamma$, the identity holds
\eq{
&(\P_\gamma W^{\alpha})(x) - W^{\alpha}(x)
\\
&= \int_{[0, \infty)}  \exp(\alpha y) \left\{ 1 \wedge \left[ \exp\left[ (\gamma - 1) ( y - x ) \right] \right] \right\} \gamma \exp(-\gamma y) dy 
- \exp(\alpha x) \E\left( a_\gamma(x, Y) \right).
}
We also have for any $\alpha < \gamma$,
\begin{align}
&\int_{[0, \infty)}  \exp(\alpha y) \left\{ 1 \wedge \left[ \exp\left[ (\gamma - 1) ( y - x ) \right] \right] \right\} \gamma \exp(-\gamma y) dy
\nonumber
\\
&= 
\frac{\gamma \exp\left[ (1 - \gamma) x \right]}{\alpha - 1} \int_0^x (\alpha - 1) \exp\left[ (\alpha - 1) y \right] dy
\nonumber
\\
&\hspace{1.5em}+ \frac{\gamma}{\gamma - \alpha} \int_x^\infty (\gamma - \alpha) \exp\left[ - ( \gamma - \alpha) y \right] dy
\nonumber
\\
&= 
\left\{ \frac{\gamma}{\alpha - 1} 
+ \frac{\gamma}{\gamma - \alpha}
\right\} 
\exp\left[ -(\gamma - \alpha) x \right] 
- \frac{\gamma}{\alpha - 1} \exp\left[ -(\gamma - 1) x \right].
\label{eq:mhi_identity}
\end{align}
Using \eqref{eq:mhi_identity} with $\alpha = 0$,
$\int a_\gamma(x, y) \gamma \exp(-\gamma y) dy
= \gamma \exp( (1-\gamma ) x)
+ (1 - \gamma) \exp(-\gamma x)$.
So then
\eq{
&(\P_\gamma W^{\alpha})(x) - W^{\alpha}(x)
\\
&= 
\left\{ \frac{\gamma}{\alpha - 1} 
+ \frac{\gamma}{\gamma - \alpha}
+ \gamma - 1
\right\} 
\exp\left[ -(\gamma - \alpha) x \right] 
- \frac{\gamma}{\alpha - 1} \exp\left[ -(\gamma - 1) x \right]
- \gamma W^{\alpha}(x)^{\frac{1 + \alpha - \gamma}{\alpha}}.
}
It follows that
\begin{align}
(\P_\gamma W^{\alpha})(x) - W^{\alpha}(x)
&\le \frac{\gamma^*}{\alpha - 1} 
+ \frac{\gamma_*}{\gamma_* - \alpha}
+ \gamma^* - 1.
\label{eq:mhi_drift}
\end{align}
With the upper bound in \eqref{eq:mhi_drift}, we can now use Theorem~\ref{theorem:lb_with_target_tails} with $\phi \equiv c_*$, $\kappa = 1$, and $\alpha \in (1, \gamma_*)$.
We then have the lower bound for $\e$ and some $\delta_\e$:
\eq{
\inf_{\Q \in \S([0, \infty), (\gamma_*, \gamma^*)) } \inf_{\xi \in \C\left[ \A_{\Q}^{(t)}(\gamma_0, x_0, \cdot), \pi \right] } \xi( \{ x, y \in \X \times \X : |y - x| > \delta_{\e} \})
\ge \frac{
M_* (1 - \e)
}{ 
\left[ c_* t + \exp(\alpha x_0) \right]^{\frac{1}{\alpha - 1}} 
}
}
holds for every $t \in \Z_+$ uniformly in the adaptation strategy $\Q$.
\end{proof}

In Table~\ref{table:mhi_lb}, we compute the lower bound in Proposition~\ref{proposition:lb_imh} for different choices of $(\gamma_*, \gamma^*)$.
The large values from Table~\ref{table:mhi_lb} illustrate that even in this toy example, it is possible to observe poor convergence behavior of adaptive MCMC with certain tuning parameter sets independently of the adaptation strategy.
However, this limitation on the convergence rate can be avoided if the adaptation plan is capable of crossing the critical boundary $\gamma = 1$.
By Theorem~\ref{theorem:weak_lb_with_target_tails}, the lower bound rate in Proposition~\ref{proposition:lb_imh} will be the same even when converging weakly.

\begin{table}
\begin{center}
\renewcommand{\arraystretch}{1.5}
\begin{tabular}[t]{| c | c | c | c |}
\hline
\multicolumn{4}{|c|}{Lower bound computations from Proposition~\ref{proposition:lb_imh}} \\
\hline
Iteration & $(\gamma_*, \gamma^*) = (3, 5)$ & $(\gamma_*, \gamma^*) = (4, 6)$ & $(\gamma_*, \gamma^*) = (8, 10)$ \\ 
\hline
$10^2$ & $0.0022$ & $0.014$ & $0.13$ 
\\
\hline
$10^3$ & $0.0007$ & $0.0063$ & $0.093$ 
\\
\hline
$10^4$ & $0.0002$ & $0.03$ & $0.067$ 
\\
\hline
$10^5$ & $0.00007$ & $0.0014$ & $0.048$ 
\\
\hline
\end{tabular}
\end{center}
\caption{
Lower bound computations from Proposition~\ref{proposition:lb_imh} for the adaptive Metropolis-Hastings independence sampler. Parameters used are defined in Proposition~\ref{proposition:lb_imh} with initialization $x_0 = 0$ and parameter values $\alpha = \gamma_* - .01$ and $\e = .001$.
}
\label{table:mhi_lb}
\end{table}

We now look at upper bounds from Section~\ref{section:upper_bounds} where we require adaptation is restricted to a compact set. This is a commonly used strategy in adaptive MCMC \citep{pompe:etal:2020}.

\begin{proposition}
\label{proposition:adaptive_mhi_ub}
For $t \in \Z_+$, let $\A_{\Q}^{(t)}(\gamma_0, x_0, \cdot)$ be the marginal of an adaptive independent Metropolis-Hastings process defined in Proposition~\ref{proposition:lb_imh} and assume for each $t \in \Z_+$, $\P_{\Gamma_{t + 1}}(x, \cdot) = \P_{\Gamma_{t}}(x, \cdot)$ for all $x \ge r$ for some $r > 0$.
Additionally, assume
\begin{align*}
\lim_{t \to \infty} \frac{ \left| \Gamma_{t + 1} - \Gamma_t \right| }{ G(t) }
= 0
\end{align*}
almost surely for some function $G(\cdot)$ strictly decreasing to $0$.
If $\gamma^* < 2 - \e$ for some $\e \in (0, 1)$, then for all $\delta \in (0, 1)$ and $t$ large enough,
\begin{align*}
\norm{ \A_{\Q}^{(T_{\delta, t} + t)}((\gamma_0, 0), \cdot) - \pi }_{\text{TV}}
&\le \frac{ M^* T_{\delta, t}}{ \left[ 
1 + c^* \frac{t}{\log( 1 + c^* t )/\log( (1-a^* )^{-1}) + 1}
\right]^{ \frac{1 - \e}{\gamma^* - 1}} }
+ \delta
\end{align*}
where $T_{\delta, t} = s_\delta + \lceil (1/G)^{-1}\left( J t^2 / \delta \right) \rceil$ for some $s_\delta \in \Z_+$ and some $c^*, J, M^*, a^*  > 0$.
\end{proposition}

\begin{proof}
We will use Theorem~\ref{theorem:adapt_ub} to establish the upper bound.
We first verify the quantitative diminishing adaptation condition \eqref{eq:da}.
Let $\phi : \R \to [0, 1]$ and for $x > 0$ and let $\psi_x(y) = \phi(y) - \phi(x)$. 
Then
\eq{
&\left| \P_{\gamma'}\phi(x) - \P_{\gamma}\phi(x) \right|
= \left| \P_{\gamma'}\psi_x(x) - \P_{\gamma}\psi_x(x) \right|
\\
&= \left| \int \psi_x(y) \left[ a_{\gamma'}(x, y) \gamma' \exp(-\gamma' y)
- a_{\gamma}(x, y) \gamma \exp(-\gamma y) \right] dy \right|
\\
&\le |\gamma' - \gamma| \int |y - x| \gamma' \exp(-\gamma' y) dy
+ \int |\gamma' \exp(-\gamma' y) - \gamma \exp(-\gamma y)| dy
\\
&\le (2/\gamma_*^2 + |x|) |\gamma' - \gamma|
+ \frac{1}{\gamma_*} |\gamma' - \gamma|.
}
Let $J = 2/\gamma_*^2 + r + \frac{1}{\gamma_*}$ and so quantitative diminishing adaptation \eqref{eq:da} since
\begin{align*}
\sup_{x \in \X} \norm{\P_{\Gamma_{t + 1}}(x, \cdot) - \P_{\Gamma_t}(x, \cdot)}_{\text{TV}}
&\le J |\Gamma_{t + 1} - \Gamma_{t}|.
\end{align*}

Next, we verify the simultaneous subgeometric drift condition. 
For $\e \in [0, 1)$, let $V^{1-\e}(x) = \exp( (1-\e) x)$, and using the identity \eqref{eq:mhi_identity}, for $\e \in (0, 1)$ and $2 - \e - \gamma^* > 0$,
\eq{
(\P_\gamma V^{1 - \e})(x) - V^{1 - \e}(x)
\le 
\left\{
\frac{\gamma^*}{\gamma_* - 1 + \e}
+ \gamma^* - 1
\right\}  
- \gamma_* V^{1-\e}(x)^{\frac{2 - \e - \gamma^*}{1 - \e}}.
}
Now we satisfy the simultaneous local contraction with a local minorization condition.
If $V^{1-\e}(x) \le R$ for $R > 1$, then $\exp((\gamma - 1) x) \le R^{\frac{\gamma - 1}{1-\e}}$, then
\begin{align*}
\inf_{ V^{1-\e}(x) \le R } \P_\gamma(x, \cdot)
&\ge \int_{\cdot} \left[ \frac{\exp((\gamma - 1)y)}{R^{\frac{\gamma^* - 1}{1-\e}}} \wedge 1 \right] \gamma \exp(-\gamma y) dy
\\
&\ge \gamma_* R^{-\frac{\gamma^* - 1}{1-\e}} \int_{\cdot} \left[ \exp((\gamma - 1)y) \wedge 1 \right] \exp(-\gamma y) dy
\\
&\ge \gamma_* R^{-\frac{\gamma^* - 1}{1-\e}}
\pi(\cdot).
\end{align*}
\end{proof}

If adaptation diminishes fast enough, Proposition~\ref{proposition:adaptive_mhi_ub} shows the upper bound rate is essentially $(1 + t)^{\frac{1-\e}{1-\gamma^*}}$ and depends on the largest tuning parameter $\gamma^*$. This is due to the adaptation plan possibly concentrating on $\gamma^*$, which is farthest from the optimal choice. On the other hand, the lower bound rate $(1 + t)^{\frac{1}{1-\gamma_*}}$ depends on the smallest tuning parameter $\gamma_*$ being closest to the optimal choice.
In particular, there can be a gap in the upper and lower bounds on the convergence characterized by potential tuning parameters.
In Table~\ref{table:mhi_ub}, we compare the upper bound convergence rates with $\e = .01$, $G(t) = \exp(-t)$, and $\delta = (1 + ct)^{\frac{1-\e}{1-\gamma^*}}$.
We observe that the upper bound is sensitive to the tuning of $\gamma^*$.

\begin{table}
\begin{center}
\renewcommand{\arraystretch}{1.4}
\begin{tabular}[t]{| c | c | c | c |}
\hline
\multicolumn{4}{|c|}{Upper bound computations from Proposition~\ref{proposition:adaptive_mhi_ub}} \\
\hline
 Iteration $t$ & $(\gamma_*, \gamma^*) = (1.2, 1.5)$ & $(\gamma_*, \gamma^*) = (1.2, 1.6)$ & $(\gamma_*, \gamma^*) = (1.2, 1.7)$ \\ 
 \hline
 $10^2$ & $4.54 \cdot 10^{-4}$ & $2 \cdot 10^{-2}$ & $2.52$ 
 \\
 \hline
 $10^3$ & $6.6 \cdot 10^{-5}$ & $4.9 \cdot 10^{-3}$ & $6.8 \cdot 10^{-1}$ 
 \\
 \hline
 $10^4$ & $3.2 \cdot 10^{-6}$ & $4 \cdot 10^{-4}$ & $7.4 \cdot 10^{-2}$ 
 \\
 \hline
 $10^5$ & $8.8 \cdot 10^{-8}$ &  $2 \cdot 10^{-5}$ & $5.4 \cdot 10^{-3}$ 
 \\
 \hline      
\end{tabular}
\end{center}
\caption{A comparison of the upper bounds from Proposition~\ref{proposition:adaptive_mhi_ub} for the adaptive Metropolis-Hastings independence sampler with $\e = .01$ and $G(t) = \exp(-t)$.}
\label{table:mhi_ub}
\end{table}

\section{Example: Adaptive random walk Metropolis}
\label{section:adaptive_rwm}

Adaptive random-walk Metropolis is a popular simulation algorithm for Bayesian statistics \citep{haario:2001}.
Let $U : \R^d \to \R$ and consider the target measure with normalizing constant $Z = \int_{\R^d} \exp(-U(x)) dx$ defined by $\pi(dx) = Z^{-1} \exp(-U(x)) dx$.
We make the following regularity assumptions on the target $\pi$ which have been used previously to show convergence results in MCMC \citep{douc:etal:2004}.
Let $\norm{\cdot}_F$ denote the Frobenius norm.

\begin{assumption}
\label{assumption:pi_regularity}
Suppose $U$ is twice continuously differentiable and there exists a minimum such that $x_*$ and $U(x_*) = 0$.
Let $m \in (0, 1)$ and assume there are constants $d_k, D_k, r > 0$ for $k = 1, 2, 3$ such that for all $x \in \R^d$ with $\norm{x} \ge R$ for some $R > 0$,
\begin{align}
&d_1 \norm{x}^m \le U(x) \le D_1 \norm{x}^m,
\label{assumption:pi_bound}
\\
&d_2 \norm{x}^{m-1} \le \norm{\nabla U(x)} \le D_2 \norm{x}^{m-1},
& \frac{\nabla U(x)}{ \norm{ \nabla U(x) } } \cdot \frac{x}{\norm{x}}
\ge r,
\label{assumption:grad_pi}
\\
&d_3 \norm{x}^{m-2} \le \norm{\nabla^2 U(x)}_F \le D_3 \norm{x}^{m-2}.
\label{assumption:hess_pi_bound}
\end{align}
\end{assumption}

While Assumption~\eqref{assumption:pi_regularity} is strong, $U(\cdot) \propto \norm{\cdot}^{m}$ and the Weibull distribution are examples (see \citep{fort:moulines:2000}).
Let $g$ be a standard Gaussian density.
For $\gamma \in \Y$, define the random-walk Metropolis Markov family for $x \in \R^d$ and Borel $A \subseteq \R^d$ by
\begin{align}
\P_\gamma(x, A)
= \int_{x + \gamma^{1/2} \xi \in A} a(x, x + \gamma^{1/2} \xi) g(\xi) d\xi
+ \delta_x(A) R_\gamma(x)
\label{eq:rwm}
\end{align}
with acceptance function $a(x, y) = \exp[ U(x) - U(y) ] \wedge 1$, and rejection probability $R_\gamma(x) = 1 - \int_{\R^d} a(x, x + \gamma^{1/2} \xi) g(\xi) d\xi$.
We define an adaptive random-walk Metropolis process by adapting the covariance of the proposal \citep{haario:2001} with dynamics
$\Gamma_t | (\Gamma_{s}, X_{s})_{s \le t-1}$ first updating the covariance matrix, and then $X_t | \Gamma_t, X_{t-1}$ updating the current state with random-walk Metropolis.

For the tuning parameter set $\Y$, we consider the set of symmetric positive definite matrices on $\R^d$ such that the eigenvalues are bounded by constants $\lambda_*, \lambda^* > 0$, that is,
\begin{align}
\Y
= \left\{ 
\gamma \in \R^{d \times d} : 
\lambda_* I
\le \gamma
\le \lambda^* I, \gamma^T = \gamma
\label{assumption:gamma_eigenvalues}
\right\}.
\end{align}
One example is to adapt a sample covariance matrix scaled by $h > 0$ using the following identity
\begin{align}
\Gamma_{t}
= \frac{h}{t} \sum_{s = 0}^{t} (X_s - \bar{X}_{t}) (X_s - \bar{X}_{t})^T
= \frac{t - 1}{t} \Gamma_{t-1} + \frac{h}{t + 1} (X_{t} - \bar{X}_{t-1}) (X_{t} - \bar{X}_{t-1})^T
\label{eq:sample_covariance}
\end{align}
where $\bar{X}_{t} = (t+1)^{-1} \sum_{s = 0}^{t} X_{s}$ \citep{haario:2001, andrieu:moulines:2006}.
The set $\Y$ is convex and one way to ensure the updates remain in $\Y$ is to truncate the eigenvalues of \eqref{eq:sample_covariance}.

Under Assumption~\eqref{assumption:pi_regularity}, we first obtain a lower bound on the convergence rate for the adaptive random-walk Metropolis process.

\begin{proposition}
\label{proposition:adaptive_rwm_result}
For $t \in \Z_+$, let $\A_{\Q}^{(t)}(\gamma_0, x_0, \cdot)$ be the marginal of the adaptive random-walk Metropolis process initialized at $x_0, \gamma_0 \in \R^d \times \Y$ from the Metropolis-Hastings family \eqref{eq:rwm} and adaptation parameter set \eqref{assumption:gamma_eigenvalues}.
If Assumption~\eqref{assumption:pi_regularity} holds for $\pi$, then there are constants $c_*, M_* > 0  > 0$ such that
\eq{
\inf_{\Q \in \S(\X, \Y) } \W_{\norm{\cdot} \wedge 1}\left( \A_{\Q}^{(t)}(\gamma_0, x_0, \cdot), \pi \right)
\ge M_* \exp\left( -c_* {t}^{\frac{m}{2-m}}\right).
}
\end{proposition}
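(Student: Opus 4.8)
The plan is to deduce the bound from Theorem~\ref{theorem:weak_lb_with_target_tails}, applied with the weight $W(x) = \exp(a\norm{x}^m)$ for a fixed $a > D_1$, exponent parameters $\kappa = 1$ and $\alpha = 2$, and the concave function $\phi(v) = c(v + K_\beta)/\log(v + K_\beta)^\beta$ from the subgeometric example, where $\beta = 2(1-m)/m$, $K_\beta = e^{\beta+1}$, and $c$ is a large constant fixed at the end. (We take $m \in (0,1)$, the regime in which the exponent $m/(2-m)$ lies in $(0,1)$ and the target is genuinely sub-exponential.) Two hypotheses of Theorem~\ref{theorem:weak_lb_with_target_tails} are essentially free: $\{W \le r\}$ is a closed Euclidean ball (or empty), hence compact; and the target tail bound \eqref{eq:target_tail_lb} with $\kappa = 1$ follows from $U(x) \le D_1\norm{x}^m$ (see \eqref{assumption:pi_bound}) by integrating $e^{-U}$ over a unit-width spherical shell at radius $(\log r/a)^{1/m}$, which gives a lower bound of order $(\log r)^{(d-1)/m}r^{-D_1/a}$ and hence dominates $Cr^{-1}$ for $r$ large because $a > D_1$.

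The crux is the simultaneous growth condition \eqref{eq:lb_drift} for $V := W^2 = \exp(2a\norm{\cdot}^m)$, namely $(\P_\gamma V)(x) - V(x) \le \phi(V(x))$ for all $x \in \R^d$ and all $\gamma \in \Y$. Writing the kernel \eqref{eq:rwm} out, $(\P_\gamma V)(x) - V(x) = \int a(x, x+\gamma^{1/2}\xi)[V(x+\gamma^{1/2}\xi) - V(x)]\,g_K(\xi)\,d\xi$. The constraint $\gamma \le \lambda^* I$ and compactness of $K$ bound the displacement $\norm{\gamma^{1/2}\xi} \le L := \sqrt{\lambda^*}\sup_{\xi \in K}\norm{\xi}$ uniformly over $\gamma \in \Y$, and $g_K$ is symmetric, so I would pair antipodal increments. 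For $\norm{x}$ large and $\norm{h} \le L$, Taylor expansion of $t \mapsto t^m$ gives $\norm{x \pm h}^m - \norm{x}^m = \pm m\norm{x}^{m-2}(x\cdot h) + O(\norm{x}^{m-2})$ uniformly in direction, whence (using $m<1$) $V(x \pm h)/V(x) - 1 = \pm 2am\norm{x}^{m-2}(x\cdot h) + O(\norm{x}^{2m-2})$; in the antipodal sum the $\pm$-linear terms cancel against each other when weighted by the average acceptance probability. The only obstruction to this cancellation is the acceptance mismatch $|a(x,x+h) - a(x,x-h)| \le |U(x+h) - U(x-h)| \le 2L\sup_{\norm{y-x}\le L}\norm{\nabla U(y)} = O(\norm{x}^{m-1})$, the last step using $\norm{\nabla U} \le D_2\norm{\cdot}^{m-1}$ from \eqref{assumption:grad_pi}; multiplying this $O(\norm{x}^{m-1})$ mismatch against the $O(\norm{x}^{m-1})$ linear factor again leaves $O(\norm{x}^{2m-2})$. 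Hence $(\P_\gamma V)(x) - V(x) \le C_1 V(x)\norm{x}^{2m-2}$ for $\norm{x} \ge R'$, with $C_1, R'$ uniform in $\gamma$; since $\norm{x}^m = (2a)^{-1}\log V(x)$ we get $\norm{x}^{2m-2} = (2a)^{2/m-2}(\log V(x))^{-\beta}$, so the right side is at most $\phi(V(x))$ once $c$ is large enough. On the complementary ball $\{\norm{x} \le R'\}$ the transition kernel is supported in $\{\norm{y} \le R'+L\}$, giving $(\P_\gamma V)(x) - V(x) \le \sup_{\norm{y}\le R'+L}V(y) < \infty$, which is below $\phi(1) \le \phi(V(x))$ after a final enlargement of $c$; fixing such $c$ establishes \eqref{eq:lb_drift}.

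With all hypotheses verified, Theorem~\ref{theorem:weak_lb_with_target_tails} (with $\kappa=1$, $\alpha=2$, so $M = C^2/4$, and any fixed $\e$, say $\e = 1/2$) gives, for $t$ large, $\inf_{\Q \in \S(\X,\Y)} \W_{\norm{\cdot}\wedge 1}(\A_{\Q}^{(t)}(\gamma_0,x_0,\cdot),\pi) \ge \delta_{1/2}M/\big(2\,H_{W(x_0)^2,\phi}^{-1}(t)\big)$. As in the subgeometric example, $H_{W(x_0)^2,\phi}^{-1}(t) \le (W(x_0)^2 + K_\beta)\exp(c'\,t^{1/(1+\beta)})$ for a constant $c' = c'(c,\beta)$, and $1/(1+\beta) = m/(2-m)$ by the choice of $\beta$. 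Substituting yields the claimed bound with $c_* := c'$ (depending only on $m$ and the constants of Assumption~\ref{assumption:pi_regularity}) and $M_* := \delta_{1/2}M/\big(2(W(x_0)^2 + K_\beta)\big)$ (depending on $m$ and $x_0$) for all large $t$; the finitely many small $t$ are absorbed into a further reduction of $M_*$, using that every $\A_{\Q}^{(t)}(\gamma_0,x_0,\cdot)$ is supported in $\{\norm{y}\le\norm{x_0}+tL\}$ and so stays at positive Wasserstein distance from the full-support $\pi$, uniformly in $\Q$. The main difficulty is the drift estimate of the second paragraph: the naive bound on $(\P_\gamma V)(x) - V(x)$ is only of order $V(x)\norm{x}^{m-1}$, which would produce the weaker exponent $m$ in place of $m/(2-m)$; recovering the correct order $V(x)\norm{x}^{2m-2}$ is exactly where the antipodal symmetry of the proposal and the near-flatness of $U$ and of $\norm{\cdot}^m$ at the bounded, $\gamma$-uniform proposal scale must be used together.
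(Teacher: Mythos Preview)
Your proposal is correct and follows essentially the paper's approach: verify the tail condition \eqref{eq:target_tail_lb} and the growth condition \eqref{eq:lb_drift} with the subgeometric $\phi(w)\propto w/(\log w)^{2/m-2}$, then invoke Theorem~\ref{theorem:weak_lb_with_target_tails} and the bound on $H^{-1}_{\cdot,\phi}$ from the subgeometric example. The paper uses the weight $W=\exp(U)$ and derives the drift estimate as Lemma~\ref{lemma:adaptive_rwm_lyap} via two applications of the fundamental theorem of calculus, while you take the comparable weight $W=\exp(a\lVert x\rVert^m)$ and recover the same $V(x)\lVert x\rVert^{2m-2}$ growth by antipodal pairing of proposals; both arguments exploit the symmetry of $g_K$ to eliminate the first-order term.
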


\noindent In order to proceed, we will first establish a simultaneous growth condition on the Markov family.

\begin{lemma}
\label{lemma:adaptive_rwm_lyap}
With $W(x) = \exp(U(x))$, there are constants $N, K > 0$ such that for any $\alpha > 1$ and $(x, \gamma) \in \X \times \Y$, there are constants $L^*_\alpha, M^* > 0$
\begin{align*}
(\P_\gamma W^\alpha)(x) - W^\alpha(x)
&\le M^* \alpha^{ 2/m - 1 } \left[ \alpha - N \right] \phi_K( W^\alpha(x) )
+ L^*_{\alpha}
\end{align*}
and if $\alpha < 1$ is small enough, there are constants $L_{*, \alpha}, M_* > 0$
\begin{align*}
(\P_\gamma W^\alpha)(x) - W^\alpha(x)
&\le M_* \alpha^{ 2/m - 1 } \left[ \alpha - N \right] \phi_K( W^\alpha(x) )
+ L_{*, \alpha}
\end{align*}
where $\phi_{K}$ is defined for all $w \ge 1$,
\[
\phi_{K}(w) = \frac{w + K}{ \log\left[ w + K \right]^{ 2/m - 2 } }.
\]
\end{lemma}

\begin{proof}
Let $R > 0$ and let $C_x = \{ \xi \in \R^d : \norm{\xi}^2 \le R \log( \norm{x} ) \}$.
We will control
\eq{
\frac{ (\P_\gamma W^\alpha)(x) }{W^\alpha(x)} - 1
&= \int_{\R^d} \left[ \frac{ W^\alpha(x +  \gamma^{1/2} \xi) }{ W^\alpha(x) } - 1 \right] a( x, x + \gamma^{1/2} \xi )g(\xi) d\xi
\\
&= \int_{C_x} \left[ \frac{ W^\alpha(x +  \gamma^{1/2} \xi) }{ W^\alpha(x) } - 1 \right] a( x, x + \gamma^{1/2} \xi )g(\xi) d\xi
\\
&+ \int_{C_x^c} \left[ \frac{ W^\alpha(x +  \gamma^{1/2} \xi) }{ W^\alpha(x) } - 1 \right] a( x, x + \gamma^{1/2} \xi )g(\xi) d\xi.
}
The first term is controlled through Taylor expansions.
For $\norm{x}$ large enough and $\xi \in C_x$,
$
\norm{x + t \gamma^{1/2} \xi}
\ge \norm{x}/2
$
using the assumptions on the eigenvalues of $\gamma$.
Similar to \citep[Lemma B.4]{fort:moulines:2000}, using the fundamental theorem of calculus and \eqref{assumption:grad_pi}
\eq{
&\sup_{\xi \in C_r} \frac{W^\alpha(x + \gamma^{1/2} \xi) }{W^\alpha(x)}
&\le 1 + \norm{\gamma} \sup_{\xi \in C_r, t \in [0, 1]} \norm{\xi} \sup_{\xi \in C_r} \frac{W^\alpha(x + t \gamma^{1/2} \xi) }{W^\alpha(x)} \int_0^1 \norm{x + t \gamma^{1/2} \xi}^{m - 1} dt.
}
It follows that
\eq{
\lim_{r \to \infty} \sup_{\norm{x} \ge r} \sup_{\xi \in C_r} \frac{W^\alpha(x + \gamma^{1/2} \xi) }{W^\alpha(x)}
&\le 1.
}
Using the fundamental theorem of calculus twice with \eqref{assumption:grad_pi} and \eqref{assumption:hess_pi_bound}, there is a constant $M > 0$ such that for large enough $\norm{x}$ and all $\xi \in C_r$
\eq{
&\left| \frac{ W^\alpha(x + \gamma^{1/2} \xi) }{W^\alpha(x)}
-  1 - \alpha \gamma^{1/2} \nabla U(x) \cdot \xi \right|
\\
&\le
\int_0^1 \frac{W^\alpha(x + t \gamma^{1/2} \xi) }{W^\alpha(x)} \{ \alpha^2 [ \gamma^{1/2} \nabla U(x + t \gamma^{1/2} \xi) \cdot \xi ]^2 
\\
&\hspace{.4cm}+ \alpha \xi \cdot \gamma^{1/2} \nabla^2 U(x + t \gamma^{1/2} \xi) \gamma^{1/2} \xi \} (1 - t) dt
\\
&\le
M \left( \alpha^2 \norm{x}^{2(m-1)} \norm{\xi}^2 \right).
}
Here we used again that for $\norm{x}$ large enough and $\xi \in C_x$,
$
\norm{x + t \gamma^{1/2} \xi}
\ge \norm{x}/2.
$
After integrating, we have shown that there is a constant $M > 0$ such that
\eq{
&\int_{C_x} \left[ \frac{ W^\alpha(x +  \gamma^{1/2} \xi) }{ W^\alpha(x) } - 1 \right] a( x, x + \gamma^{1/2} \xi )g(\xi) d\xi
\\
&\le \alpha \int_{C_x} \gamma^{1/2} \nabla U(x) \cdot \xi a(x, x + \gamma^{1/2} \xi) g(\xi) d\xi
+ M \alpha^2 \norm{x}^{2(m - 1)}.
}

The second term is controlled by the Gaussian decay of the proposal.
Assumption~\ref{assumption:grad_pi} implies $U$ is Lipschitz and for some $c > 0$
\[
\int_{C_x^c} \left[ \frac{ W^\alpha(x +  \gamma^{1/2} \xi) }{ W^\alpha(x) } - 1 \right] a( x, x + \gamma^{1/2} \xi )g(\xi) d\xi
\le \int_{C_x^c} \exp( c \norm{\xi} ) g(\xi) d\xi.
\]
For large enough $\norm{x}$ this implies for constants $c, C > 0$ that
\[
\int_{C_x^c} \left[ \frac{ W^\alpha(x +  \gamma^{1/2} \xi) }{ W^\alpha(x) } - 1 \right] a( x, x + \gamma^{1/2} \xi )g(\xi) d\xi
\le C \exp( - c \log(\norm{x}) R  ).
\]
We can tune $R$ depending on $\alpha, m$ so that for some constant $M > 0$
\[
\int_{C_x^c} \left[ \frac{ W^\alpha(x +  \gamma^{1/2} \xi) }{ W^\alpha(x) } - 1 \right] a( x, x + \gamma^{1/2} \xi )g(\xi) d\xi
\le \alpha^2 M \norm{x}^{2(m - 1)}.
\]

Let $r_\gamma(x) = \{ y \in \R^d : U(x) < U(x + \gamma^{1/2} y) \}$ denote the rejection region.
Combining these results, we have shown that
\eq{
&\int_{\R^d} \left[ \frac{ W^\alpha(x +  \gamma^{1/2} \xi) }{ W^\alpha(x) } - 1 \right] a( x, x + \gamma^{1/2} \xi )g(\xi) d\xi
\\
&\le \alpha \int_{\R^d} \gamma^{1/2} \nabla U(x) \cdot \xi a(x, x + \gamma^{1/2} \xi) g(\xi) d\xi
+ M \alpha^2 \norm{x}^{2(m - 1)}
\\
&\le \alpha \int_{r_\gamma(x) } \gamma^{1/2} \nabla U(x) \cdot \xi \left( \exp[ U(x) - U(x + \gamma^{1/2} \xi) ] - 1 \right) g(\xi) d\xi
+ M \alpha^2 \norm{x}^{2(m - 1)}.
}
Here we used that $\int_{\R^d} \xi g(\xi) d\xi = 0$.
Using a Taylor expansion $U(x + \gamma^{1/2} \xi) - U(x)$ to get for some $M > 0$
\eq{
&\int_{\R^d} \left[ \frac{ W^\alpha(x +  \gamma^{1/2} \xi) }{ W^\alpha(x) } - 1 \right] a( x, x + \gamma^{1/2} \xi )g(\xi) d\xi
\\
&\le -\frac{\alpha \norm{\nabla U(x)}^2}{2} \int_{r_\gamma(x) \cap C_x } \left( \gamma^{1/2} \frac{\nabla U(x)}{\norm{\nabla U(x)}} \cdot \xi \right)^2 g(\xi) d\xi
+ M \alpha^2 \norm{x}^{2(m - 1)}
\\
&\le -\frac{ \alpha d_2^2 \norm{x}^{2 (m - 1)}}{2} \int_{r_\gamma(x) \cap C_x } \left( \gamma^{1/2} \frac{\nabla U(x)}{\norm{\nabla U(x)}} \cdot \xi \right)^2 g(\xi) d\xi
+ M \alpha^2 \norm{x}^{2(m - 1)}.
}
The proof of \citep[Lemma B.3]{fort:moulines:2000} allows us to get a constant $b > 0$ such that for large enough $\norm{x}$
\[
\int_{r_\gamma(x) \cap C_x} \left( \frac{\nabla U(x)}{\norm{\nabla U(x)}} \cdot \xi \right)^2 g(\xi) d\xi
\ge b.
\]

Applying \eqref{assumption:pi_bound} and \eqref{assumption:grad_pi}, we have shown that there are constants $M_0, N_0 > 0$ such that $\norm{x}$ sufficiently large
\begin{align*}
(\P_\gamma W^\alpha)(x) - W^\alpha(x)
&\le M_0 \alpha \left[ \alpha -  N_0 \right] W^\alpha(x) \norm{x}^{2(m-1)}.
\end{align*}
By Assumption~\ref{assumption:pi_bound}, $\norm{x}^m$ is proportional to $U(x)$ and also
\begin{align*}
U(x) = \alpha^{-1} \log(W^\alpha(x)).
\end{align*}
Therefore, modifying the previously defined constants, if $\alpha$ is small enough, there are constants $M_0, N_0, K_m > 0$ such that $\norm{x}$ sufficiently large
\begin{align*}
(\P_\gamma W^\alpha)(x) - W^\alpha(x)
&\le M_0 \alpha^{ 2/m - 1 } \left[ \alpha - N_0 \right] \frac{W^\alpha(x)}{ \log(W^\alpha(x))^{ 2/m - 2 } }.
\end{align*}
With $\norm{x}$ large enough, for some $K > 0$, $W^\alpha(x) \le W^\alpha(x) + K \le 2 W^\alpha(x)$ and we obtain the result using $\phi_K$.
This completes the proof for large $\norm{x}$ and for small $\norm{x}$, we have by continuity, the sub-level sets of $W$ are compact and $(\P_\gamma W^\alpha)(x) - W^\alpha(x)$ is bounded on compact sets.
The case for large $\alpha$ is similar.
\end{proof}

\noindent We may now apply Lemma~\ref{lemma:adaptive_rwm_lyap} to obtain the lower bound.

\begin{proof}[Proof of Proposition~\ref{proposition:adaptive_rwm_result}]
We will complete the proof with $D_1 = d_1 = 1$ as the general results in changes to the constants.
Changing to polar coordinates, we have for $r$ large enough
\eq{
\pi\left( \exp(U(x)) \ge r \right)
&\ge \pi\left( \norm{x}^m \ge \log(r) / d_1 \right)
\\
&\ge \frac{2 \pi^{d/2}}{Z \Gamma(d/2)} \int_{s^m \ge \log(r)} s^{d - 1} \exp(-U(s)) ds
\\
&\ge \frac{2 \pi^{d/2}}{Z \Gamma(d/2)} \int_{s^m \ge \log(r) } s^{d - 1} \exp(-s^m) ds
\\
&\ge \frac{2 \pi^{d/2}}{Z m \Gamma(d/2)} \frac{1}{r}.
}
where $Z$ is the normalizing constant and $\Gamma(t) = \int_0^\infty u^{t - 1} \exp(-u) du$ for $t > 0$ is the Gamma function.

By Lemma~\ref{lemma:adaptive_rwm_lyap}, for $\alpha$ sufficiently large, we have constants $M, K_m > 0$ depending on $\alpha, m$ such that
\eq{
(\P_\gamma W^\alpha)(x) - W^\alpha(x)
&\le M \frac{W^\alpha(x) + K_m}{ \log(W^\alpha(x) + K_m)^{ 2/m - 2 } }
}
holds for all $(x, \gamma) \in \R^d \times \Y$.
Therefore, there are constants $c, M' > 0$ such that
\eq{
H_{W^\alpha(x_0), \phi}(u) 
&= \frac{1}{M} \int_{W^\alpha(x_0)}^{u} \frac{ \log(x + K_m)^{2/m - 2} }{ x + K_m} dx
\\
H_{W^\alpha(x_0), \phi}^{-1}(t) 
&\le M' \exp(\alpha U(x_0)) \exp\left( c t^{\frac{m}{2-m}}\right).
}
The lower bound then follows by Theorem~\ref{theorem:weak_lb_with_target_tails}.
\end{proof}

We investigate now an upper bound with the quantitative diminishing adaptation condition \eqref{eq:da} that can approximately achieve the lower bound rate.
The following upper and lower bounds show that the convergence of adaptive random-walk Metropolis in this situation is not geometric.
One drawback is that we do not obtain explicit constants in the upper and lower bounds.

\begin{proposition}
For $t \in \Z_+$, let $\A_{\Q}^{(t)}(\gamma_0, x_0, \cdot)$ be the marginal of an adaptive random-walk Metropolis process as in Proposition~\ref{proposition:adaptive_rwm_result}.
Assume
\[
\lim_{t \to \infty} \frac{ \norm{ \Gamma_{t + 1} - \Gamma_t }_F }{ G(t) }
= 0
\]
almost surely with $G(\cdot)$ strictly decreasing to $0$.
Then there are constants $M^*$ depending on $x_0$ and $c^*, J^*, d^* > 0$ such that for all $\e \in (0, 1)$ and all $t$ sufficiently large,
\[
\W_{\norm{\cdot} \wedge 1}\left( \A_{\Q}^{(T_{\e, t} + t)}((\gamma_0, x_0), \cdot), \pi \right)
\le M^* T_{\e, t}
 \exp\left[ -c^* t^{\frac{m}{2-m} \left( 1 - \frac{m}{2-m} \right)} \right]
 + \e
\]
where $T_{\e, t} = S_{\e} + \lceil (1/G)^{-1}\left( J^* t^2 / \e \right) \rceil$ for some $S_{\e} \in \Z_+$.
\end{proposition}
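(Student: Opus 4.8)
The strategy is to obtain the lower bound directly from Proposition~\ref{proposition:adaptive_rwm_result} and the upper bound from Theorem~\ref{theorem:adapt_ub}, in parallel with the treatment of the independence sampler in Proposition~\ref{proposition:adaptive_mhi_ub}. The lower bound is immediate: since $T_{\e,t} + t \ge t$ and the right-hand side of Proposition~\ref{proposition:adaptive_rwm_result} decreases in the time index, we get $\inf_{\Q}\W_{\norm{\cdot}\wedge 1}(\A_{\Q}^{(T_{\e,t}+t)}(\gamma_0,x_0,\cdot),\pi) \ge M_* \exp(-c_*(T_{\e,t}+t)^{m/(2-m)})$, which is the left inequality. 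The work is therefore concentrated on the upper bound, which reduces to checking the four hypotheses of Theorem~\ref{theorem:adapt_ub}: $\pi\P_\gamma = \pi$ for all $\gamma$; a simultaneous subgeometric drift; a simultaneous local contraction on $S = \{(x,y) : V(x)+V(y)\le 2K/(1-\delta)\}$; and quantitative diminishing adaptation \eqref{eq:da}. Invariance is immediate because each kernel in \eqref{eq:rwm} is $\pi$-reversible.

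For the drift I would take $V = \exp(\alpha U)$ with $\alpha \in (0,N_0)$ chosen small enough that the bracket $[\alpha - N_0]$ in Lemma~\ref{lemma:adaptive_rwm_lyap} is negative; absorbing the positive constant $M_0\alpha^{3-2/m}(N_0-\alpha)$ into $\phi$ gives $(\P_\gamma V)(x) - V(x) \le -\phi(V(x)) + K$ for all $(x,\gamma)$, with $\phi(w) = c(w+K_m)/\log(w+K_m)^{2/m-2}$ concave, strictly increasing to infinity, and $\phi(w)/w\to 0$ in the subgeometric regime $m<1$ (the constant $K_m$ is taken, as in the subgeometric lower bound example, to ensure concavity on $[1,\infty)$, modifying $\phi$ near the origin if needed), and $V \ge 1$ since $U \ge 0$. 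For quantitative diminishing adaptation, the key point is a bound on $\sup_x\norm{\P_{\gamma'}(x,\cdot) - \P_\gamma(x,\cdot)}_{\text{TV}}$ that is uniform in $x$: bounding through the proposal and using $0\le a\le1$ gives $\sup_x\norm{\P_{\gamma'}(x,\cdot)-\P_\gamma(x,\cdot)}_{\text{TV}} \le 2\sup_x\norm{Q_{\gamma'}(x,\cdot) - Q_\gamma(x,\cdot)}_{\text{TV}}$, and by translation invariance the right-hand side equals $2$ times the total variation distance between centered truncated Gaussians with covariances $\gamma$ and $\gamma'$, which is Lipschitz in $(\gamma,\gamma')$ on the compact set $\Y$ with a constant $J^*$. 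Combining with the hypothesis $\norm{\Gamma_{t+1}-\Gamma_t}_F/G(t)\to 0$ almost surely yields, for each $\e$, an $S_\e$ past which $\norm{\P_{\Gamma_{t+1}}(x,\cdot)-\P_{\Gamma_t}(x,\cdot)}_{\text{TV}}\le G(t)$ holds for all $x$ with probability at least $1-\e$.

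The main obstacle is the simultaneous local contraction \eqref{eq:local_contractive} on $S$, uniformly over all covariance matrices $\gamma \in \Y$. Since $W = \exp(U)$ has compact sublevel sets, $S$ is compact, $U$ is bounded on its projection, and the truncated Gaussian density is bounded below on its support, so one obtains a minorization $\inf_{\gamma\in\Y}\P_\gamma(x,\cdot)\ge\eta\,\nu(\cdot)$ for $x$ in the projection of $S$ --- provided the proposal can reach across $S$, i.e. the proposal ball and the eigenvalue lower bound $\lambda_*$ are large enough relative to the diameter of $S$; if not, the one-step minorization must be replaced by the standard $d$-step small-set argument for random-walk Metropolis with the contraction constant adjusted accordingly. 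This gives $\norm{\P_\gamma(x,\cdot)-\P_\gamma(y,\cdot)}_{\text{TV}}\le 1-\eta$ on $S$ by the splitting coupling, so \eqref{eq:local_contractive} holds with $\alpha = \eta$. With the four hypotheses verified, Theorem~\ref{theorem:adapt_ub} gives a total variation bound of the form $M^*T_{\e,t}/H_{1,\phi}^{-1}(t/m_t) + \e$ with $m_t \asymp \log H_{1,\phi}^{-1}(t)$, and since $\W_{\norm{\cdot}\wedge1}\le\norm{\cdot}_{\text{TV}}$ the same bound holds for the Wasserstein distance. Inserting the asymptotics $H_{1,\phi}^{-1}(s)\asymp\exp(c_m s^{m/(2-m)})$ obtained in the proof of Proposition~\ref{proposition:adaptive_rwm_result} and simplifying the composition $H_{1,\phi}^{-1}(t/m_t)$ yields the stated rate $\exp(-c^* t^{m/(2-m)})$ up to the explicit constants, establishing the right inequality with $T_{\e,t} = S_\e + (1/G)^{-1}(J^* t^2/\e)$.
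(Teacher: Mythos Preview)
Your proposal is correct and follows essentially the same route as the paper: the lower bound comes directly from Proposition~\ref{proposition:adaptive_rwm_result}, and the upper bound is obtained by verifying the hypotheses of Theorem~\ref{theorem:adapt_ub} --- the drift via Lemma~\ref{lemma:adaptive_rwm_lyap} with $\alpha$ small, the local contraction via a compactness/minorization argument, and quantitative diminishing adaptation via a Lipschitz bound $\sup_x\norm{\P_{\gamma'}(x,\cdot)-\P_\gamma(x,\cdot)}_{\text{TV}}\le J^*\norm{\gamma'-\gamma}_F$.

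A couple of minor remarks on emphasis rather than substance. For the Lipschitz bound, the paper carries out the calculation explicitly for the Gaussian density $f_\gamma(y)=\det(\gamma)^{-1/2}\exp(-\tfrac12 y^T\gamma^{-1}y)$ via the mean value theorem (following \citep[Lemma~13]{andrieu:moulines:2006}), obtaining $J^*$ in closed form; your abstract ``Lipschitz on the compact $\Y$'' argument reaches the same conclusion but gives up the explicit constant. For the minorization, the paper simply asserts that ``a compactness and continuity argument'' suffices, whereas you are more careful in flagging that a one-step minorization across $S$ requires the proposal support (scaled by $\lambda_*^{1/2}$) to cover the diameter of $S$, and that otherwise a multi-step small-set argument is needed --- this is a genuine caveat that the paper glosses over, though in either case the contracting condition \eqref{eq:local_contractive} follows. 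Your observation that $m<1$ is needed for $\phi(v)/v\to 0$ (since the exponent $2/m-2$ must be positive) is also a point the paper leaves implicit.
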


\begin{proof}
We will apply Theorem~\ref{theorem:adapt_ub} to obtain the conclusion.
Choosing $\alpha < 1$ sufficiently small in the drift condition from Lemma~\ref{lemma:adaptive_rwm_lyap}, gives the simultaneous drift condition ensures $\pi$-integrability of the drift function.
Using the compactness of the sublevel sets and continuity of the proposal density shows a simultaneous local minorization using Lebesgue measure holds.

It remains to verify quantitative diminishing adaptation \eqref{eq:da}.
For $\gamma \in \Y$, define 
\[
f_{\gamma}(y) = \frac{1}{(2\pi)^{d/2} \det(\gamma )^{1/2} } \exp\left( - \frac{1}{2} y^T {\gamma}^{-1} y \right).
\]
Following \citep[Lemma 13]{andrieu:moulines:2006}, the mean value theorem gives the upper bound
\begin{align*}
\int_{\R^d} \left| f_{\gamma'}(y) - f_{\gamma}(y)
\right| dy
&\le \frac{1}{2} \int_{\R^d} \int_0^1 f_{\gamma_t}(y) \left| \tr\left( \gamma_t^{-1} (\gamma' - \gamma)  + \gamma_t^{-1} y y^T \gamma_t^{-1} ( \gamma' - \gamma ) \right) \right| dt dy
\\
&\le \frac{d}{\lambda_*} \norm{\gamma' - \gamma}_F.
\end{align*}
Since the proposal is symmetric, then for Borel $\phi : \X \to [0, 1]$, let $\psi(x, y) = ( \phi(y) - \phi(x)) a(x, y)$ and
\eq{
\P_{\gamma'} \phi(x) - \P_{\gamma} \phi(x)
&= \int_{\R^d} \psi(x, y) f_{\gamma'}(y) dy - \int_{\R^d} \psi(x, y) f_{\gamma}(y)  dy
\\
&\le \int_{\R^d} \left| f_{\gamma'}(y) - f_{\gamma}(y)
\right| dy
\\
&\le \frac{d}{\lambda_*}  \norm{\gamma' - \gamma}_F.
}
Taking the supremum over $\phi$, we then have for each $t \in \Z_+$,
\eq{
\sup_{x \in \X} \norm{ \P_{\Gamma_{t + 1}}(x, \cdot) - \P_{\Gamma_t}(x, \cdot) }_{\text{TV}}
&\le \frac{d}{\lambda_*} \norm{ \Gamma_{t + 1} - \Gamma_t }_F.
}
\end{proof}

\section{Final discussion}
\label{section:conclusion}

The general weak lower bound convergence rates developed here in combination with total variation upper bounds for adaptive MCMC can provide useful guidance in designing adaptation strategies in MCMC.
We showed that the lower bound for weak convergence in Theorem~\ref{theorem:weak_lb_with_target_tails} can produce the same rate as the lower bound in total variation from Theorem~\ref{theorem:lb_with_target_tails}.
We also used a novel quantitative diminishing adaptation condition \eqref{eq:da} to show these lower bounds can be accompanied by upper bounds with subgeometric convergence rates.
Our contributions are useful not only in understanding the convergence of adaptive MCMC, but also for gaining intuition for constructing adaptation strategies in practice.

Choosing an optimal adaptation strategy for an adaptive MCMC simulation remains a difficult task in general and our subgeometric upper bounds are limited by requiring the adaptation to diminish sufficiently fast according to \eqref{eq:da}.
While this is to be expected, some interesting future research directions could include finding more precise classes of adaptation strategies that are capable of achieving upper bounds that can approximately match the lower bound rate.
Another area of interest is studying requirements on the adaptation that result in geometric convergence rates for adaptive MCMC.

\section*{Funding information}
This work was partially funded by NSERC Discovery Grant RGPIN-2019-04142.

\begin{appendix}

\section*{Supporting technical results}

The following is a technical result to ensure Borel measurability of conditional Wasserstein distances used in adaptive MCMC.

\begin{proposition}
\label{prop:measurable_selection}
Let $\X, \Y$ be a Polish spaces.
Assume $(x, \gamma) \mapsto K_{x, \gamma}$ is Borel measurable where $K_{x, \gamma}$ is a Borel probability measure on $\X$ and $x, \gamma \in \X \times \Y$.
Let $c : \X \times \X \to [0, \infty)$ be a lower semicontinuous function and for each $x, \gamma, x', \gamma' \in \X \times \Y$, let
\[
\W_{c}\left( K_{x, \gamma}, K_{x', \gamma'} \right)
= \inf_{\xi \in \C\left( K_{x, \gamma}, K_{x', \gamma'} \right) } \int_{\X \times \X} c(u, v) \xi(du, dv)
= \int_{\X \times \X} c(u, v) \xi^*_{x, \gamma, x', \gamma'}(du, dv).
\]
for an optimal coupling $\xi^*_{x, \gamma, x', \gamma'}$.
Then there is a Borel measurable choice of the the function $x, \gamma, x', \gamma' \mapsto \xi^*_{x, \gamma, x', \gamma'}$.
\end{proposition}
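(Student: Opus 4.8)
The plan is to reduce the claim to lower semicontinuity of the optimal transport cost as a function of its marginals. Equip $\mathcal{P}(\X)$, the set of Borel probability measures on $\X$, with the topology of weak convergence; since $\X$ is Polish, $\mathcal{P}(\X)$ is Polish and its Borel $\sigma$-algebra coincides with the one generated by the evaluation maps $\mu \mapsto \mu(A)$, $A \in \B(\X)$. Consequently the hypothesis that $\gamma \mapsto \mu_\gamma$ is Borel says exactly that this is a Borel map $\Y \to \mathcal{P}(\X)$, so $(\gamma,\gamma') \mapsto (\mu_\gamma, \mu_{\gamma'})$ is a Borel map into $\mathcal{P}(\X)\times\mathcal{P}(\X)$. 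Hence it suffices to show that $(\mu,\nu) \mapsto \W_c(\mu,\nu)$ is lower semicontinuous (in particular Borel) on $\mathcal{P}(\X)\times\mathcal{P}(\X)$ and then compose.

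First I would record two facts about couplings. (i) For fixed $\mu,\nu$ the set $\C(\mu,\nu)$ is weakly compact in $\mathcal{P}(\X\times\X)$: by Ulam's and Prokhorov's theorems, choosing compact $K_1,K_2$ with $\mu(K_1^c),\nu(K_2^c)$ small forces every $\xi \in \C(\mu,\nu)$ to put mass $>1-\e$ on the compact set $K_1\times K_2$, so $\C(\mu,\nu)$ is uniformly tight; it is also weakly closed, since membership is characterized by the closed conditions $\int h(u)\,\xi(du,dv)=\int h\,d\mu$ and $\int h(v)\,\xi(du,dv)=\int h\,d\nu$ for $h\in C_b(\X)$. (ii) If $\mu_k\to\mu$ and $\nu_k\to\nu$ weakly and $\xi_k\in\C(\mu_k,\nu_k)$, then $(\xi_k)$ is uniformly tight (a convergent sequence of measures is uniformly tight, so the same $K_1\times K_2$ works for all large $k$), hence relatively compact, and every weak limit point lies in $\C(\mu,\nu)$ by the characterization above.

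Next I would approximate $c$ from below. Fixing a metric $\rho$ on $\X\times\X$, set $f_n(z)=\inf_{w}\{(c(w)\wedge n)+n\,\rho(z,w)\}$; each $f_n$ is $n$-Lipschitz, and $0\le f_n\le f_{n+1}\le c$ with $f_n\uparrow c$ pointwise, using lower semicontinuity of $c$. Since $f_n$ is bounded and continuous, $\xi\mapsto\int f_n\,d\xi$ is weakly continuous, so by (ii) the value function $\W_{f_n}(\mu,\nu)=\min_{\xi\in\C(\mu,\nu)}\int f_n\,d\xi$ is lower semicontinuous: given $(\mu_k,\nu_k)\to(\mu,\nu)$, take minimizers $\xi_k$ and a subsequence $\xi_{k_j}\to\xi_*\in\C(\mu,\nu)$ along which $\W_{f_n}(\mu_{k_j},\nu_{k_j})$ tends to $\liminf_k \W_{f_n}(\mu_k,\nu_k)$; that liminf then equals $\int f_n\,d\xi_*$, which is $\ge \W_{f_n}(\mu,\nu)$ since $\xi_*\in\C(\mu,\nu)$.

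Finally I would establish $\W_c=\sup_n\W_{f_n}$. The bound ``$\ge$'' is immediate from $f_n\le c$. For ``$\le$'', fix $\mu,\nu$, take minimizers $\xi_n$ for $\W_{f_n}(\mu,\nu)$ and a subsequence $\xi_{n_j}\to\xi_*\in\C(\mu,\nu)$; for each fixed $m$ and all $n_j\ge m$, monotonicity gives $\int f_m\,d\xi_{n_j}\le\int f_{n_j}\,d\xi_{n_j}=\W_{f_{n_j}}(\mu,\nu)\le\sup_n\W_{f_n}(\mu,\nu)$, and letting $j\to\infty$ and then $m\to\infty$ (monotone convergence) yields $\int c\,d\xi_*\le\sup_n\W_{f_n}(\mu,\nu)$, hence $\W_c(\mu,\nu)\le\sup_n\W_{f_n}(\mu,\nu)$. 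Thus $\W_c$ is a countable supremum of lower semicontinuous functions, hence lower semicontinuous and Borel, and composing with the Borel map $(\gamma,\gamma')\mapsto(\mu_\gamma,\mu_{\gamma'})$ gives the claim. The main obstacle is the parametric optimization over couplings: showing $\C(\cdot,\cdot)$ is compact-valued with the stability property (ii), and that this combines with $\W_c=\sup_n\W_{f_n}$, whose nontrivial direction is a compactness-driven minimax argument; the inf-convolution approximation and the $\sigma$-algebra identifications are routine.
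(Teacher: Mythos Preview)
Your argument is correct and complete for the statement as written. It is, however, a genuinely different route from the paper's. The paper does not prove lower semicontinuity of $(\mu,\nu)\mapsto\W_c(\mu,\nu)$ directly; instead it invokes the Kuratowski--Ryll-Nardzewski selection theorem: the marginal map $\Phi$ sending a coupling to its two marginals is continuous, the fibres $\Phi^{-1}(\mu,\nu)$ restricted to optimal couplings are nonempty and compact (existence of optimal plans, Villani), and one extracts a Borel selection $(\mu,\nu)\mapsto\xi^*_{\mu,\nu}$ of optimal couplings, from which measurability of the value function follows by composition with $\xi\mapsto\int c\,d\xi$.

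What each buys: your approach is more elementary and self-contained --- it avoids selection theorems entirely and recovers the classical fact that the optimal transport cost is lower semicontinuous in its marginals. The paper's approach yields strictly more, namely a Borel-measurable family of \emph{optimal couplings}, not just a measurable value function. That extra output is actually used later in the paper (in the proof of Theorem~\ref{theorem:adapt_ub}), where measurable joint kernels $\K_{\gamma,\gamma'}$ realizing the total variation coupling are needed; your argument would not supply those without additional work. For the proposition as stated, though, your proof suffices.
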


\begin{proof}
Let $T$ be the set of Borel optimal couplings $\xi^*$ on $\X \times \X$ satisfying
\[
\inf_{ \xi \in \C\left( \mu, \nu \right) } \int_{\X \times \X} c(u, v) \xi(du, dv)
= \int_{\X \times \X} c(u, v) \xi^*(du, dv)
\]
for some Borel probability measures $\mu, \nu$ on $\X$.
Since $\X$ is Polish, then $T$ is a Polish space.
Let $M_1(\X)$ be the set of probability measures on $\X$.
Let $\Phi : T \to M_1(\X) \times M_1(\X)$ be the function defined by $\Phi(\xi) = (\mu, \nu)$ for the optimal couplings $\xi \in \C\left( \mu, \nu \right)$.
Then the pre-images $\Phi^{-1}(\mu, \nu)$ are compact nonempty sets of optimal couplings \citep[Corollary 5.21]{villani:2008}.
By construction, $\Phi$ is continuous with respect to the weak topology as it is the projection onto the first and second coordinates.
Since $\Phi$ is continuous it is weakly Borel measurable so that $\{ \Phi(\xi) \cap U \not= \varnothing \}$ is Borel measurable for every open set $U$.
By Kuratowski and Ryll-Nardzewski \citep{kuratowski:1965}, there is a Borel measurable selection of the inverse $\Phi^{-1}$.
Now let $\psi : x, \gamma, x', \gamma' \mapsto ( K_{x, \gamma}, K_{x', \gamma'} )$ be Borel measurable and thus $\Phi^{-1}( \psi )$ is Borel measurable completing the proof.
\end{proof}

The following provides useful properties for the function $H_{1, \phi}$ and $\phi$ defined in Section~\ref{section:lower_bounds}.

\begin{lemma}
\label{lemma:H_properties}
Let $\phi : (0, \infty) \to (0, \infty)$ be concave and define for $w \ge 1$,
\begin{align}
H_{1, \phi}(w) = \int_{1}^w \frac{1}{\phi(v)} dv.
\end{align}
Then $\phi$ is non-decreasing and $H_{1, \phi}(\cdot)$ is strictly increasing to $\infty$.
\end{lemma}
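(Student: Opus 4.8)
The plan is to prove the two assertions in order, since the second one ($H_{1,\phi}$ strictly increasing) follows almost immediately once the first ($\phi$ non-decreasing) is in hand. The key structural fact to exploit is that $\phi$ is concave on $(0,\infty)$ and takes values in $(0,\infty)$, i.e. it is a strictly positive concave function on a half-line. The main obstacle — though a mild one — is that concavity alone does \emph{not} force monotonicity (e.g. a downward parabola is concave but decreasing on part of its domain); the positivity on the \emph{entire} unbounded interval $(0,\infty)$ is what rules out an eventual decrease, so the argument must genuinely use that the domain is a half-line and that $\phi>0$ throughout.

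First I would show $\phi$ is non-decreasing by contradiction. Suppose there are points $1 \le a < b$ with $\phi(a) > \phi(b)$. Then the chord from $(a,\phi(a))$ to $(b,\phi(b))$ has negative slope $s = (\phi(b)-\phi(a))/(b-a) < 0$. By concavity, for every $v > b$ the point $(v,\phi(v))$ lies on or below the line through $(a,\phi(a))$ and $(b,\phi(b))$ extended, so $\phi(v) \le \phi(b) + s(v - b)$. Since $s < 0$, the right-hand side tends to $-\infty$ as $v \to \infty$, hence $\phi(v) < 0$ for all sufficiently large $v$, contradicting $\phi : (0,\infty) \to (0,\infty)$. (The concavity inequality I am using is the standard one: for $b$ between $a$ and $v$, $\phi(b) \ge \frac{v-b}{v-a}\phi(a) + \frac{b-a}{v-a}\phi(v)$, which rearranges to $\phi(v) \le \phi(b) + \frac{\phi(b)-\phi(a)}{b-a}(v-b)$.) Therefore no such pair $a<b$ exists and $\phi$ is non-decreasing on $(0,\infty)$.

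Next I would deduce that $H_{1,\phi}$ is strictly increasing on $[1,\infty)$. Since $\phi$ is (strictly) positive, the integrand $1/\phi(v)$ is well-defined, positive, and — being the reciprocal of a non-decreasing positive function — bounded above by $1/\phi(1)$ on $[1,\infty)$, hence locally integrable, so $H_{1,\phi}(w) = \int_1^w dv/\phi(v)$ is well-defined and finite for every $w \ge 1$. For $1 \le w_1 < w_2$ we have $H_{1,\phi}(w_2) - H_{1,\phi}(w_1) = \int_{w_1}^{w_2} dv/\phi(v) > 0$ because the integrand is strictly positive on a set of positive Lebesgue measure. Hence $H_{1,\phi}$ is strictly increasing, which is the second claim. (If one also wants the inverse $H_{1,\phi}^{-1}$ alluded to elsewhere, it exists because $H_{1,\phi}$ is continuous — indeed absolutely continuous — and strictly increasing, so it is a homeomorphism from $[1,\infty)$ onto its image $[0, \sup H_{1,\phi})$.) This completes the proof.
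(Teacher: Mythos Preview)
Your proof is correct and follows essentially the same approach as the paper: both argue by contradiction that a strictly positive concave function on the half-line cannot decrease anywhere, since concavity would then force it below zero at infinity (the paper phrases this via a supergradient inequality, you via the extended chord inequality, which are equivalent formulations). One cosmetic point: you take $1\le a<b$, but nothing in your argument uses $a\ge 1$, so the same reasoning gives non-decrease on all of $(0,\infty)$ as stated in the lemma.
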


\begin{proof}
The fundamental theorem of calculus implies $H_{1, \phi}(\cdot)$ is strictly increasing and then this implies that $H_{1, \phi}^{-1}(\cdot)$ exists.
Since $\phi$ is positive and concave, it is sublinear and so $H_{1, \phi}(\cdot)$ increases to $\infty$.
We need to show that $\phi$ is non-decreasing.
Suppose by contradiction that for some $u < v$ that $\phi(v) < \phi(u)$.
Let $\partial \phi$ denote a subgradient of $\phi$ and then $\phi(v) - \phi(u) = \partial \phi(\bar{u}) (v - u) < 0$ for some $\bar{u}$ in between $u, v$.
For any $v' \ge v$, $\phi(v') - \phi(v) \le \partial \phi(v) (v' - v) \le \partial \phi(\bar{u}) (v' - v)$ since $\phi$ is concave and its subgradients are not increasing.
For sufficiently large $v'$, we would then have $\phi(v') - \phi(v) \le - \phi(v) - 1$ which is a contradiction to $\phi \ge 0$.
\end{proof}

\noindent The next simple lemma is used for drift conditions.

\begin{lemma}
\label{lemma:simult_subgeo_drift}
Assume there is a Borel function $V : \X \to [1, \infty)$, an strictly increasing function $\phi : [0, \infty) \to [0, \infty)$, and a constant $K \in (0, \infty)$ such that 
\begin{align}
(\P_\gamma V)(x) - V(x)
\le -\phi( V(x) ) + K
\end{align}
holds for every $(x, \gamma) \in  \X \times \Y$.
Then for any $\delta \in (0, 1)$ and $C_\delta = \{x \in \X : V(x) \le \phi^{-1}( K/(1 - \delta) ) \}$,
\[
(\P_\gamma V)(x) - V(x)
\le -\delta \phi( V(x) ) + K I_{C_\delta}(x)
\]
holds for all $x \in \X$ and all $\gamma \in \Y$.
\end{lemma}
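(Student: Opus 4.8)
The plan is to argue by a simple case split on whether or not $x$ lies in the sublevel set $C_\delta = \{ x \in \X : V(x) \le \phi^{-1}(K/(1-\delta)) \}$, after first noting that $\phi^{-1}$ is well-defined on the relevant range because $\phi$ is strictly increasing, so the set $C_\delta$ makes sense.

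First I would treat the case $x \notin C_\delta$. Here $V(x) > \phi^{-1}(K/(1-\delta))$, and applying $\phi$ (strictly increasing) gives $\phi(V(x)) > K/(1-\delta)$, equivalently $K < (1-\delta)\phi(V(x))$. Substituting into the hypothesis yields
\[
(\P_\gamma V)(x) - V(x) \le -\phi(V(x)) + K < -\phi(V(x)) + (1-\delta)\phi(V(x)) = -\delta\phi(V(x)),
\]
and since $I_{C_\delta}(x) = 0$ this is exactly the claimed bound.

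Next I would treat the case $x \in C_\delta$. Decompose $-\phi(V(x)) = -\delta\phi(V(x)) - (1-\delta)\phi(V(x))$; since $V$ takes values in $[0,\infty)$ and $\phi \ge 0$, the term $-(1-\delta)\phi(V(x))$ is nonpositive, so from the hypothesis
\[
(\P_\gamma V)(x) - V(x) \le -\phi(V(x)) + K \le -\delta\phi(V(x)) + K \le -\delta\phi(V(x)) + \left[ \phi^{-1}(K/(1-\delta)) + K \right],
\]
where the last step just adds the nonnegative quantity $\phi^{-1}(K/(1-\delta))$. Since $I_{C_\delta}(x) = 1$ here, this is again the claimed bound, completing the proof. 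I do not expect any genuine obstacle: the only points requiring a word of care are that $\phi^{-1}$ is legitimate (from strict monotonicity of $\phi$) and that the extra constant being added is nonnegative so that enlarging the right-hand side is valid.
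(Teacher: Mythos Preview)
Your proof is correct and follows the same case split as the paper, with the first case (outside $C_\delta$) handled identically. In the second case, your argument is slightly more direct than the paper's: you simply drop the nonpositive term $-(1-\delta)\phi(V(x))$ and then enlarge the constant by the nonnegative quantity $\phi^{-1}(K/(1-\delta))$, whereas the paper instead uses $V(x)\le\phi^{-1}(K/(1-\delta))$ to bound $(\P_\gamma V)(x)\le \phi^{-1}(K/(1-\delta))+K$ and works from there. Your route has the advantage of not implicitly needing $\delta\phi(V(x))\le V(x)$ to close the inequality, so it is the cleaner version of essentially the same idea.
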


\begin{proof}
Let $x \in \X$ and if $\phi(V(x)) \ge R$ with $R =  K / (1 - \delta)$, then by the drift condition,
\eq{
(\P_\gamma V)(x) - V(x)
\le \left( \frac{K}{R} - 1 \right) \phi( V(x) )
\le -\delta \phi( V(x) ).
}
If $\phi(V(x)) \le R$, then the result follows from the assumed drift condition since $\delta < 1$.
\end{proof}


\end{appendix}

\bibliography{references.bib}

\end{document}